\theoremstyle{definition}
\theoremstyle{plain}
\newtheorem{theorem}{Theorem}[section]
\newtheorem{definition}{Definition}[section]
\newtheorem{remark}{Remark}[section]
\newtheorem{lemma}{Lemma}[section]
\newtheorem{proposition}{Proposition}[section]
\numberwithin{equation}{section}
\renewcommand{\ni}{\noindent}
\newcommand{\vs}{\vspace}
\begin{document}

\title{Existence and dependency results for coupled Schr\"odinger equations
with critical exponent on waveguide manifold\footnote{  This work was partially supported by NNSFC (No. 12171493).}}

\author{ Jun Wang$^{a}$\footnote {Corresponding author. wangj937@mail2.sysu.edu.cn (J. Wang), mcsyzy@mail.sysu.edu.cn (Z. Yin)}, Zhaoyang Yin$^{a, b}$ \\
{\small $^{a}$Department of Mathematics, Sun Yat-sen University, Guangzhou, 510275, China } \\
{\small $^{b}$School of Science, Shenzhen Campus of Sun Yat-sen University, Shenzhen, 518107, China } \\
}

	\date{}

	\maketitle

\date{}

 \maketitle \vs{-.7cm}

  \begin{abstract}
We study the coupled Schr\"odinger equations with critical exponent on $\mathbb{R}^3  \times \mathbb{T}$. With the help of scaling argument and semivirial-vanishing technology, we obtain the existence and $y$-dependence  of solution, the tori can be generalized to $1$-dimensional compact Riemannian manifold.
Moreover, the conclusion of this paper can be extended to systems with any number of components.
\end{abstract}

{\footnotesize {\bf   Keywords:}  Coupled Schr\"odinger equations; Normalized solutions; Waveguide manifold.

{\bf 2010 MSC:}  35A15, 35B38, 35R01.
}

\section{ Introduction and main results}
This paper studies the following question
\begin{equation} \label{eq1.1}
 \left\{\aligned
&-\Delta_{x,y}u+\lambda_1 u =|u|^{2}u+u|v|^2,\ (x, y) \in \mathbb{R}^3  \times \mathbb{T} , \\
&-\Delta_{x,y}v+\lambda_2 v =|v|^{2}v+v|u|^2,\ (x, y) \in \mathbb{R}^3  \times \mathbb{T} , \\
&\int_{\mathbb{R}^3  \times \mathbb{T} }u^2dxdy=\int_{\mathbb{R}^3  \times \mathbb{T} }v^2dxdy=\Theta^2,
\endaligned
\right.
\end{equation}
where the mass $\Theta>0$, the frequency $\lambda_1\cdot\lambda_2\geq0$. We are interested in the property of solution for \eqref{eq1.1}, such as existence, $y$-dependence.

\textbf{1.1 Motivation.} As described in \cite{ACWZ2012}(also see Theorem 1.6 in \cite{HLTL2026}),
the question
\begin{equation*}
 \left\{\aligned
&-\Delta u+\lambda_1 u =|u|^{2}u+u|v|^2,\ x \in \mathbb{R}^4  , \\
&-\Delta v+\lambda_2 v =|v|^{2}v+v|u|^2,\ x \in \mathbb{R}^4
\endaligned
\right.
\end{equation*}
has no positive least energy solution(positive normalized solution) in the Euclidean space when $\lambda_1\cdot\lambda_2>0$, this is the direct conclusion of using Pohozaev identity. It is natural to ask whether the same result holds for the coupled Schr\"odinger equations on $\mathbb{R}^3  \times \mathbb{T}$? In this paper, we give an negative answer to this question. Indeed, we may find a solution $(u,v,\lambda_1,\lambda_2)$ satisfies \eqref{eq1.1} which is positive and $\lambda_1+\lambda_2>0$. Furthermore, we prove the $y$-dependence of normalized solution by using the scaling argument.

\eqref{eq1.1} comes from the time-dependent coupled Schr\"odinger equations
\begin{equation*}
 \left\{\aligned
&i\frac{\partial \Phi_1}{\partial t}+\Delta \Phi_1+\mu_1\Phi_1^3+\nu\Phi_1\Phi_2^2=0,\ x \in \mathbb{R}^3\times\mathbb{T}, \\
&i\frac{\partial \Phi_2}{\partial t}+\Delta \Phi_2+\mu_2\Phi_2^3+\nu\Phi_2\Phi_1^2=0,\ x \in \mathbb{R}^3\times\mathbb{T},
\endaligned
\right.
\end{equation*}
where $\mu_1,\ \mu_2$ and $\nu$ describe the intraspecies and interspecies scattering lengths. The
sign of $\nu$ determines whether the interactions are repulsive or attractive, that is, the interaction is attractive when $\nu>0$ and repulsive when $\nu<0$. The existence of solutions has recently received great interest, see \cite{ACWZ2012} for fixed frequency issue. Another important feature of system is that
\begin{equation*}
  \int_{\mathbb{R}^3  \times \mathbb{T}}|\Phi_1(t,x,y)|^2dxdy=\int_{\mathbb{R}^3  \times \mathbb{T}}|\Phi_2(t,x,y)|^2dxdy=\Theta^2
\end{equation*}
of the wave functions are conserved. For the existence of normalized solution, we can refer to \cite{HLTL2026} for more detail.

Recently, there has been an increasing interest in studying dispersive equations on the waveguide manifolds $\mathbb{R}^d\times\mathbb{T}^n$ whose mixed type geometric nature makes the underlying analysis rather challenging and delicate. For the nonlinear Schr\"odinger model, there have been many results on waveguide manifolds, as shown in \cite{{XCZG2020},{ZHBP2014},{RKJM2021},{YL2023},{ZZH2021},{ZZJZ2021},{NTNV2012},{ADI2012},{XYHY2024}}. However, there is relatively little research for coupled Schr\"odinger equation on the waveguide manifolds. Terracini et. al in \cite{STNT2014}  consider the Cauchy problems
\begin{equation*}
  i\partial_tu-\Delta_{x,y} u-u|u|^\alpha=0,\ u|_{t=0}=u_0,
\end{equation*}
where $(x,y)\in\mathbb{R}_x^d\times M_y^k$, $M_y^k$ is a compact Riemannian manifold. They proved that above a critical mass the ground states have nontrivial $M_y^k$ dependence.  After that, Hajaiej et. al in \cite{HHYL2024} consider the following question
\begin{equation*}
  \Delta_{x,y}^2u-\beta \Delta_{x,y}u + \theta u = |u|^{\alpha}u, \ (x,y)\in\mathbb{R}^d\times\mathbb{T}^n ,
\end{equation*}
where $\beta \in \mathbb{R}$ and $\alpha \in (0, \frac{8}{d+n})$. They extend the conclusion in \cite{STNT2014} to four order Schr\"odinger equation by using new scaling argument.  Therefore, it is natural to ask whether the $y$-dependence holds for coupled Schr\"odinger equations? We give an affirmative answer in this paper.

The natural energy functional associated with the equation \eqref{eq1.1} is given by
\begin{equation*}
I(u,v)=\frac{1}{2} \int_{\mathbb{R}^3 \times\mathbb{T} }[| \nabla_{x,y}u|^2+ |\nabla_{x,y}v|^2 ] d xdy -\frac{1}{4} \int_{\mathbb{R}^3\times\mathbb{T} }(|u|^4+2|u|^2|v|^2+|v|^4)d xdy.
\end{equation*}
Normalized solutions to \eqref{eq1.1} correspond to critical points of the energy functional $I$ restricted on $S_\Theta\times S_\Theta$, where
\begin{equation*}
S_\Theta:= \{u\in H^1(\mathbb{R}^3 \times\mathbb{T}) : \|u\|_2 = \Theta > 0\}
\end{equation*}
and $m_\Theta$ is defined as
\begin{equation*}
  m_\Theta=\inf\{I(u,v): (u,v)\in V_\Theta\},
\end{equation*}
where
\begin{equation*}
  Q(u,v)=\int_{\mathbb{R}^3\times \mathbb{T} }(|\nabla_xu |^{2}+|\nabla_xv |^{2})dxdy -\frac{3}{4}\int_{\mathbb{R}^3\times\mathbb{T}}(|u|^{4}+2|u|^{2}|v|^{2}+|v|^{4})dxdy
\end{equation*}
and
\begin{equation*}
  V_\Theta=\{(u,v)\in S_\Theta\times S_\Theta,Q(u,v)=0\}.
\end{equation*}
In order to study the periodic dependency results, we define
\begin{equation*}
\widehat{S_\Theta}:= \{u\in H^1(\mathbb{R}^3) : \|u\|_2 = \Theta > 0\}.
\end{equation*}

\textbf{1.2 Main results.}  The main results of this paper are as follows.

\begin{theorem}\label{t1.1}
The problem \eqref{eq1.1}
has a couple $(u,v,\lambda_1,\lambda_2)$ solution for any $\Theta>0$, where $(u,v)$ is positive and $\lambda_1+\lambda_2>0$.
\end{theorem}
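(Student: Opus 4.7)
The plan is to obtain $(u,v)$ as a minimizer of $I$ on the semivirial manifold $V_\Theta$, to read off $\lambda_1,\lambda_2$ as Lagrange multipliers of the two mass constraints, and to deduce positivity and $\lambda_1+\lambda_2>0$ from the tested equations combined with $Q=0$. The $x$-scaling that underlies the definition of $V_\Theta$ will be the main technical tool throughout.

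For the variational set-up I would use the $x$-scaling $u_t(x,y):=t^{3/2}u(tx,y)$, which preserves the $L^2$-norm on $\R^3\times\mathbb{T}$. A direct computation gives
\[
I(u_t,v_t)=\tfrac{t^2}{2}\bigl(\|\nabla_x u\|_2^2+\|\nabla_x v\|_2^2\bigr)+\tfrac{1}{2}\bigl(\|\partial_y u\|_2^2+\|\partial_y v\|_2^2\bigr)-\tfrac{t^3}{4}\int(u^4+2u^2v^2+v^4),
\]
so $t\mapsto I(u_t,v_t)$ admits a unique strictly positive critical point $t^*=t^*(u,v)$, a strict maximum, and $(u_{t^*},v_{t^*})\in V_\Theta$. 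This exhibits $m_\Theta$ as the mountain-pass level of the fibered functional $J(u,v):=I(u_{t^*},v_{t^*})$ on $S_\Theta\times S_\Theta$, so the Lagrange multiplier attached to $Q=0$ automatically vanishes at a minimizer. On $V_\Theta$ the constraint collapses $I$ to $\tfrac{1}{6}(\|\nabla_x u\|_2^2+\|\nabla_x v\|_2^2)+\tfrac{1}{2}(\|\partial_y u\|_2^2+\|\partial_y v\|_2^2)\ge 0$, and combining $Q=0$ with the critical embedding $H^1(\R^3\times\mathbb{T})\hookrightarrow L^4$ yields a positive lower bound on $\|\nabla_x u\|_2^2+\|\nabla_x v\|_2^2$, hence $m_\Theta>0$.

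I would next show that $m_\Theta$ is attained. The reduced energy bounds any minimizing sequence $(u_n,v_n)\subset V_\Theta$ in $H^1$, so after $x$-translations and extraction $(u_n,v_n)\rightharpoonup(u,v)$. The main technical point is the critical growth in the $x$-variable: I would run a profile decomposition adapted to the waveguide, ruling out $L^2$-dichotomy via the strict subadditivity $m_\Theta<m_{\Theta'}+m_{\Theta-\Theta'}$ for $0<\Theta'<\Theta$ (which follows from the scaling above) and ruling out concentration on an $\R^4$-Aubin--Talenti bubble via a strict energy comparison against a $y$-independent test pair, namely a positive ground state of the subcritical coupled system on $\R^3$ with individual mass $\Theta^2/|\mathbb{T}|$, extended trivially in $y$ and rescaled onto $V_\Theta$. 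Strong $L^4$-convergence then follows and the weak limit satisfies $\|u\|_2=\|v\|_2=\Theta$, $Q(u,v)=0$ and $I(u,v)=m_\Theta$.

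Since the $Q$-multiplier vanishes by the mountain-pass identification, the Lagrange conditions reduce to \eqref{eq1.1} with some $\lambda_1,\lambda_2\in\R$. Invariance of $I$, $Q$ and the mass constraints under $(u,v)\mapsto(|u|,|v|)$ lets me choose $u,v\ge 0$, and the strong maximum principle then upgrades this to $u,v>0$. Testing the first equation by $u$, the second by $v$, adding, and using $Q(u,v)=0$ gives
\[
(\lambda_1+\lambda_2)\,\Theta^2=\tfrac{1}{3}\bigl(\|\nabla_x u\|_2^2+\|\nabla_x v\|_2^2\bigr)-\bigl(\|\partial_y u\|_2^2+\|\partial_y v\|_2^2\bigr).
\]
The main obstacle will be to certify strict positivity of this right-hand side; I would attack it via the $y$-independent competitor used above, whose explicit Pohozaev identities in $\R^3$ quantitatively force the $y$-kinetic energy of the minimizer to stay below one third of its $x$-kinetic energy. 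More broadly, the critical-nonlinearity compactness step, in particular the strict sub-threshold inequality with respect to the $\R^4$-Aubin--Talenti level, is the technical heart of the proof.
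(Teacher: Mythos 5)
Your variational framework (the $x$-fibering map, the set $V_\Theta$, the mountain-pass identification of $m_\Theta$, and the identity $(\lambda_1+\lambda_2)\Theta^2=\frac13(\|\nabla_x u\|_2^2+\|\nabla_x v\|_2^2)-(\|u_y\|_2^2+\|v_y\|_2^2)$) coincides with the paper's. But two of your key steps have genuine gaps. First, the compactness step: you propose to rule out concentration on an $\R^4$ Aubin--Talenti bubble by a strict sub-threshold inequality obtained from a $y$-independent competitor. This cannot work for all $\Theta>0$: on $V_\Theta$ the anisotropic Gagliardo--Nirenberg inequality forces $m_\Theta\gtrsim\Theta^{-4}$ as $\Theta\to0$, so for small mass $m_\Theta$ exceeds any fixed bubble energy and the sub-threshold comparison fails (the paper explicitly notes in Remark 1.1(ii) that it does \emph{not} use an energy threshold). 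The paper instead works with the relaxed problem $\widetilde{m_\Theta}=\inf\{I(u,v):Q(u,v)\le0\}$, uses Brezis--Lieb together with the monotonicity of $\Theta\mapsto m_\Theta$ to upgrade the weak limit to a minimizer, and excludes zero or semitrivial limits by the Liouville-type classification of positive entire solutions of the critical equation on $\R^4$ (Aubin--Talenti profiles cannot be $2\pi$-periodic in $y$). Relatedly, your lower bound $m_\Theta>0$ needs the scale-invariant inequality $\|u\|_4^4\lesssim\|\nabla_x u\|_2^3\|u\|_2^2(\|u\|_2+\|u_y\|_2)$ of Proposition 2.1, not the Sobolev embedding $H^1\hookrightarrow L^4$, which involves the full gradient $\nabla_{x,y}$ and yields no lower bound on $\|\nabla_x u\|_2$ alone.

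Second, the strict sign of $\lambda_1+\lambda_2$. Your plan to ``force the $y$-kinetic energy below one third of the $x$-kinetic energy'' via Pohozaev identities of a competitor is not an argument: properties of a test pair do not transfer to the minimizer, and for small $\Theta$ the minimizer is genuinely $y$-dependent (Theorem 1.2), so nothing a priori controls $\|u_y\|_2^2+\|v_y\|_2^2$. The paper's mechanism is different and essential: the mass-changing rescaling $T_\tau(u,v)=(\tau^{-2}u(\tau^{-2}x,y),\tau^{-2}v(\tau^{-2}x,y))$ together with the monotone decrease of $\Theta\mapsto m_\Theta$ gives $\frac{d}{d\tau}I(T_\tau(u,v))\big|_{\tau=1}\le0$, which combined with $Q(u,v)=0$ yields exactly $\|u_y\|_2^2+\|v_y\|_2^2\le\frac13(\|\nabla_x u\|_2^2+\|\nabla_x v\|_2^2)$, i.e. $\lambda_1+\lambda_2\ge0$. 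The equality case is then excluded because the standing assumption $\lambda_1\cdot\lambda_2\ge0$ forces $\lambda_1=\lambda_2=0$, making $(u,v)$ a positive entire solution of the critical system on $\R^4$, hence an Aubin--Talenti pair by the classification in Chen--Zou, which is incompatible with periodicity in $y$. You need both the mass-monotonicity inequality and this Liouville-type endgame; neither appears in your proposal.
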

The proof of Theorem \ref{t1.1} is based on the semivirial-vanishing technology developed by Luo in \cite{{YL2024},{YL2022}}, and we extend it to the system.
\begin{remark}\label{R1.1}
(i) In this paper, we only consider the Sobolev critical case. For mass subcritical case, the existence of solution follows directly
from the boundedness of $I$ from below on $S_\Theta\times S_\Theta$, the Gagliardo-Nirenberg inequality on waveguide manifold and a standard minimization argument. For mass supercritical case, the proof method of Theorem \ref{t1.1} is valid.

(ii) Unlike the study of normalized solution the critical Schr\"odinger equation, this article does not use an energy threshold to exclude vanishing and bifurcation.

(iii) The conclusion of Theorem \ref{t1.1} can be extended to systems with any number of components, that is, for $k\geq2$,
\begin{equation*}
  -\Delta_{x,y}u_i+\lambda_iu_i=\sum_{j=1}^ku_j^2u_i, (x,y)\in\mathbb{R}^3\times\mathbb{T},\ i=1,\cdots,k
\end{equation*}
with the normalization condition
\begin{equation*}
  \int_{\mathbb{R}^3\times\mathbb{T}}u_i^2dxdy=\Theta_i^2,\  i=1,\cdots,k.
\end{equation*}

(iv) We point out that in \cite{HLTL2026}, as well as in all
the contributions related to the problem with fixed frequencies, one of the main difficulties is represented by the fact that one searches for solutions having both $u\neq0$ and $v\neq0$. Under the setting of waveguide manifold, this problem still exists. Moreover, we must prevent the loss of part of the mass of one of the components in the passage to the limit.

(v) As described in \cite{HLTL2026}, the equations
\begin{equation*}
 \left\{\aligned
&-\Delta u+\lambda_1 u =|u|^4u+\frac{\alpha}{6}|u|^{\alpha-2}v^{\beta}u,\ x \in \mathbb{R}^3  , \\
&-\Delta v+\lambda_2 v =|v|^4v+\frac{\beta}{6}|v|^{\beta-2}u^{\alpha}v,\ x \in \mathbb{R}^3
\endaligned
\right.
\end{equation*}
has no  positive normalized solution in the Euclidean space $\mathbb{R}^3$ when $\lambda_1\cdot\lambda_2>0$, where $\alpha>1,\beta>1,\alpha+\beta=6$. The proof method of Theorem \ref{t1.1} is also valid for this question.
\end{remark}
Borrowing the ideas from \cite{STNT2014} and new scaling techniques, we obtain the following periodic dependency results.
\begin{theorem}\label{t1.2}
 For any $\Theta \in (0,\infty )$, let $(u_\Theta,u_\Theta)$ be a minimizer
of $m_\Theta$ deduced from Theorem \ref{t1.1}. Then there exists some $\Theta_0\in(0,+\infty)$ such that

$\mathrm{(1)}$ for all  $\Theta \in(0,\Theta_0)$ we have $m_\Theta<2\pi\widehat{m}_{\frac{\Theta}{\sqrt{2\pi}}}$. Moreover, for $\Theta \in(0,\Theta_0)$ any minimizer $(u_\Theta,v_\Theta)$
of $m_\Theta$
satisfies $(u_{y,\Theta},v_{y,\Theta})\neq0$.

$\mathrm{(2)}$ for all  $\Theta \in(\Theta_0,+\infty)$ we have $m_\Theta=2\pi\widehat{m}_{\frac{\Theta}{\sqrt{2\pi}}}$. Moreover, for $\Theta \in(\Theta_0,+\infty)$ any minimizer $(u_\Theta,v_\Theta)$
of $m_\Theta$
satisfies $(u_{y,\Theta},v_{y,\Theta})= 0$.
\end{theorem}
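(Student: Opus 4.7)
My plan is to decouple the $x$- and $y$-directions via the natural mass-preserving scaling and thereby rewrite $m_{\Theta}$ as a one-parameter concave envelope, whose monotonicity will encode the threshold $\Theta_{0}$. For any $(u,v)\in S_{\Theta}\times S_{\Theta}$, the family $u_{\tau}(x,y)=\tau^{3/2}u(\tau x,y)$, $v_{\tau}$ analogously, meets $V_{\Theta}$ at the unique $\tau_{*}=\tfrac{4A_{x}}{3B}$, where $A_{x},A_{y},B$ denote the $x$-kinetic, $y$-kinetic and quartic integrals of $(u,v)$. A direct substitution gives
\[
I(u_{\tau_{*}},v_{\tau_{*}})=\frac{8}{27}\frac{A_{x}^{3}}{B^{2}}+\frac{1}{2}A_{y},\qquad m_{\Theta}=\inf_{S_{\Theta}\times S_{\Theta}}\Bigl\{\frac{8}{27}\frac{A_{x}^{3}}{B^{2}}+\frac{1}{2}A_{y}\Bigr\}.
\]
Renormalising by $u=\Theta u_{0}$, $v=\Theta v_{0}$ with $(u_{0},v_{0})\in S_{1}\times S_{1}$ separates the dependence on $\Theta$ cleanly as
\[
\Theta^{2}m_{\Theta}=\inf_{S_{1}\times S_{1}}\bigl(\alpha(u_{0},v_{0})+\beta(u_{0},v_{0})\,\Theta^{4}\bigr),
\]
with $\alpha=\tfrac{8}{27}A_{x,0}^{3}/B_{0}^{2}$ and $\beta=\tfrac{1}{2}A_{y,0}$. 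The analogous computation on $\mathbb{R}^{3}$ yields $\widehat{m}_{\Lambda}=\widehat{m}_{1}/\Lambda^{2}$, and the $y$-independent product class $(u_{0},v_{0})=\bigl((2\pi)^{-1/2}w,(2\pi)^{-1/2}z\bigr)$ with $(w,z)\in\widehat{S}_{1}\times\widehat{S}_{1}$ realises exactly the value $4\pi^{2}\widehat{m}_{1}=2\pi\Theta^{2}\widehat{m}_{\Theta/\sqrt{2\pi}}$, giving the universal inequality $m_{\Theta}\leq 2\pi\widehat{m}_{\Theta/\sqrt{2\pi}}$.

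Let $f(s):=\inf_{S_{1}\times S_{1}}(\alpha+\beta s)$, so that $f(\Theta^{4})=\Theta^{2}m_{\Theta}$. As an infimum of affine-in-$s$ functions with non-negative slopes, $f$ is concave, non-decreasing and bounded above by $4\pi^{2}\widehat{m}_{1}$. Setting
\[
\Theta_{0}:=\inf\{\Theta>0:\ f(\Theta^{4})=4\pi^{2}\widehat{m}_{1}\},
\]
concavity combined with the upper bound forces $f\equiv 4\pi^{2}\widehat{m}_{1}$ on $[\Theta_{0}^{4},\infty)$ and $f<4\pi^{2}\widehat{m}_{1}$ on $[0,\Theta_{0}^{4})$, which is precisely the numerical dichotomy of the statement. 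The $y$-dependence characterisation follows immediately: for $\Theta<\Theta_{0}$ any minimiser $(u_{\Theta},v_{\Theta})$ must satisfy $\beta(u_{\Theta}/\Theta,v_{\Theta}/\Theta)>0$, since otherwise $\alpha+\beta\Theta^{4}=\alpha\geq 4\pi^{2}\widehat{m}_{1}$ contradicting strict inequality; for $\Theta>\Theta_{0}$, a minimiser with $\beta>0$ would yield $\alpha+\beta\Theta'^{4}<4\pi^{2}\widehat{m}_{1}$ for every $\Theta_{0}<\Theta'<\Theta$, contradicting the definition of $\Theta_{0}$. Existence of the infima is supplied by Theorem \ref{t1.1}.

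It remains to establish $0<\Theta_{0}<\infty$. The first bound is obtained through a $4$-dimensional Aubin--Talenti construction adapted to the waveguide: fix $z_{0}\in\mathbb{R}^{3}\times\mathbb{T}$, let $W$ be the standard Sobolev bubble in $\mathbb{R}^{4}$, and take $W_{\varepsilon}(z)=\varepsilon^{-1}\eta(z-z_{0})W((z-z_{0})/\varepsilon)$ with a smooth cutoff $\eta$. After mass normalisation, the borderline expansion $\|W_{\varepsilon}\|_{2}^{2}\sim c\varepsilon^{2}|\log\varepsilon|$, $\|\nabla W_{\varepsilon}\|_{2}^{2}\sim S^{2}$, $\|W_{\varepsilon}\|_{4}^{4}\sim S^{2}$, together with the isotropy of $\nabla$, gives $\alpha=O(\varepsilon^{2}|\log\varepsilon|)\to 0$ and $\beta=O(\varepsilon^{-2}|\log\varepsilon|^{-1})$, so that $\alpha+\beta\Theta^{4}<4\pi^{2}\widehat{m}_{1}$ for all sufficiently small $\Theta$ and thus $\Theta_{0}>0$. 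The main obstacle, and the heart of the argument in the spirit of \cite{HHYL2024,STNT2014}, is the second bound $\Theta_{0}<\infty$: it requires a Brezis--Nirenberg-type quantitative stability estimate of the form $\alpha\geq 4\pi^{2}\widehat{m}_{1}-C\beta$, valid uniformly on $S_{1}\times S_{1}$, which bounds $\sup_{\beta>0}(4\pi^{2}\widehat{m}_{1}-\alpha)/\beta<\infty$ and equivalently guarantees that the concave envelope $f$ actually hits its upper bound at a finite argument, closing the proof.
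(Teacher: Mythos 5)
Your reduction is sound and, structurally, it is the same skeleton as the paper's: projecting onto the semivirial manifold via $u_\tau=\tau^{3/2}u(\tau x,y)$ gives $I=\tfrac{8}{27}A_x^3/B^2+\tfrac12A_y$, and the normalization $u=\Theta u_0$ yields $\Theta^2 m_\Theta=\inf_{S_1\times S_1}(\alpha+\beta\Theta^4)$, which is exactly the paper's identity $m_\tau=\tau^{-2}m_{1,\tau^4}$ linking $m_\Theta$ to the auxiliary family $J_\mu$ in which $\mu$ weights the $y$-kinetic energy. The dichotomy via monotonicity of $s\mapsto f(s)$ and the characterization of $y$-(in)dependence on either side of $\Theta_0$ coincide with the paper's Lemma 3.6 (concavity is not actually needed, only monotonicity and the universal upper bound). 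Your proof of $\Theta_0>0$ by a four-dimensional Aubin--Talenti bubble with the borderline expansion $\|W_\varepsilon\|_2^2\sim c\varepsilon^2|\log\varepsilon|$ is a genuinely different construction from the paper's, which instead tensors the Euclidean minimizer $Q(x)$ with a mollified tent function concentrated in $y$ and uses $\widehat{m}_\rho=\rho^{-2}\widehat{m}_1$; both give $f(\Theta^4)<4\pi^2\widehat{m}_1$ for small $\Theta$, and yours even yields a rate.

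The genuine gap is the bound $\Theta_0<\infty$, which you correctly flag as the heart of the matter but do not prove. The inequality $\alpha\geq4\pi^2\widehat{m}_1-C\beta$ uniformly on $S_1\times S_1$ is not a lemma you can invoke; it is precisely the hard statement, equivalent to showing that $f$ \emph{attains} (rather than merely approaches) its supremum at a finite argument, and it amounts to a quantitative coercivity of the limiting Euclidean problem against $y$-oscillations. The paper devotes Lemmas 3.2--3.5 to exactly this: first $m_{1,\mu}\to2\pi\widehat{m}_{1/\sqrt{2\pi}}$ and $\mu\|\partial_y u_\mu\|_2^2\to0$ (via the slicewise bound $J_\mu\geq\widehat{m}_1\int_{\mathbb{T}}w_j\,dy$ with $w_j(y)=\|u_{\mu_j}(\cdot,y)\|_{L_x^2}^2\to(2\pi)^{-1}$ in $W^{1,1}(\mathbb{T})$); then strong $H^1$ convergence of minimizers to a $y$-independent ground state with convergent Lagrange multipliers; and finally the application of $\sqrt{-\partial_{yy}}$ to the Euler--Lagrange system, tested against $w_{i,\mu}=\sqrt{-\partial_{yy}}u_{i,\mu}$, where the spectral gap of $-\partial_{yy}$ on $\mathbb{T}$ lets the term $(\mu-1)|\rho_k|^2$ absorb the (vanishing) nonlinear and multiplier errors and forces $w_{i,\mu}=0$ for $\mu$ large. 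Without this step, or an actual proof of your stability inequality, the set $\{\Theta:f(\Theta^4)=4\pi^2\widehat{m}_1\}$ could a priori be empty, your $\Theta_0$ would equal $+\infty$, and part (2) of the theorem would be vacuous.
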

\begin{remark}
(i)  The conclusion of Theorem \ref{t1.2} can be extended to systems with any number of components, that is, for $k\geq2$,
\begin{equation*}
  -\Delta_{x,y}u_i+\lambda_iu_i=\sum_{j=1}^ku_j^2u_i, (x,y)\in\mathbb{R}^3\times\mathbb{T},\ i=1,\cdots,k
\end{equation*}
with the normalization condition
\begin{equation*}
  \int_{\mathbb{R}^3\times\mathbb{T}}u_i^2dxdy=\Theta_i^2,\  i=1,\cdots,k.
\end{equation*}
Different from Theorem \ref{t1.1},  we require each component to have the same mass, that is, $\Theta_1^2=\cdots=\Theta_k^2$. An interesting question is whether there is still a periodic dependency conclusion when $\Theta_i^2\neq\Theta_j^2,\ i\neq j,\ i=1,\cdots,k$?

(ii) For $\mathbb{R}^d\times\mathbb{T}, d\geq3$, we may obtain similar results for \eqref{eq1.1}. If $d=2$, we can observe an interesting phenomenon, that is, \eqref{eq3.1} does not have scaling property.
\end{remark}
The structure of this paper is arranged as follows. In section 2, we study the existence of normalized solution. Next, we will study $y$-dependence of the solution.

\section{Existence of solution}
In this section, we are devoted to the existence of normalized solution.
In order to deal with the non-vanishing
weak limit of a minimizing sequence, we use a scale-invariant Gagliardo-Nirenberg inequality on $\mathbb{R}^3\times \mathbb{T}$, the proof can be found in  \cite{YL2022}.
\begin{proposition}\label{p2.1}
There exists some $C > 0$ such that for all $u\in X$ we have
 \begin{equation*}
     \int_{\mathbb{R}^3\times \mathbb{T} }|u|^{4}dxdy\leq C\|\nabla_xu\|_2^3 \|u\|_2^{2}(\|u\|_2+\|u_y\|_2).
 \end{equation*}
\end{proposition}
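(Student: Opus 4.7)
The plan is to \textbf{slice in $y$}, apply a fibrewise one-dimensional Gagliardo--Nirenberg inequality on $\mathbb{T}$, and then reduce the surviving $x$-integrals to classical three-dimensional inequalities on $\mathbb{R}^{3}$. The asymmetric treatment of the two variables---sharp on the non-compact factor and only $H^{1}$-type on the compact factor---is what generates the hybrid sum $\|u\|_{2}+\|u_{y}\|_{2}$ on the right-hand side.

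First, for almost every $x\in\mathbb{R}^{3}$ I would apply the one-dimensional Sobolev embedding on the circle,
\[
\|f\|_{L^{\infty}(\mathbb{T})}^{2}\le C\,\|f\|_{L^{2}(\mathbb{T})}\bigl(\|f\|_{L^{2}(\mathbb{T})}+\|f'\|_{L^{2}(\mathbb{T})}\bigr),
\]
to $f=u(x,\cdot)$; the inhomogeneous $\|f\|_{2}^{2}$ summand, absent in the corresponding estimate on $\mathbb{R}$, is forced by compactness of $\mathbb{T}$ since the mean of $f$ is otherwise uncontrolled. Writing $a(x):=\|u(x,\cdot)\|_{L^{2}_{y}}$ and $b(x):=\|u_{y}(x,\cdot)\|_{L^{2}_{y}}$, this yields the pointwise slice estimate
\[
\int_{\mathbb{T}}|u(x,y)|^{4}\,dy \;\le\; \|u(x,\cdot)\|_{L^{\infty}_{y}}^{2}\,a(x)^{2} \;\le\; C\,a(x)^{3}\bigl(a(x)+b(x)\bigr).
\]

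Second, integrating over $\mathbb{R}^{3}$ reduces the proof to controlling $\int a^{4}\,dx$ and $\int a^{3}b\,dx$. The pivotal ingredient is the chain-rule bound $|\nabla_{x}a(x)|\le\|\nabla_{x}u(x,\cdot)\|_{L^{2}_{y}}$, obtained by differentiating $a(x)^{2}=\int_{\mathbb{T}}|u|^{2}\,dy$ and applying Cauchy--Schwarz; together with $\|a\|_{L^{2}(\mathbb{R}^{3})}=\|u\|_{L^{2}}$, this gives $\|\nabla_{x}a\|_{L^{2}(\mathbb{R}^{3})}\le\|\nabla_{x}u\|_{L^{2}}$. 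The first integral is then controlled by the classical three-dimensional $L^{4}$ Gagliardo--Nirenberg inequality $\|a\|_{L^{4}(\mathbb{R}^{3})}^{4}\le C\|\nabla a\|_{2}^{3}\|a\|_{2}\le C\|\nabla_{x}u\|_{2}^{3}\|u\|_{2}$, and the second by Hölder followed by the Sobolev embedding $\dot{H}^{1}(\mathbb{R}^{3})\hookrightarrow L^{6}(\mathbb{R}^{3})$: $\int a^{3}b\,dx\le\|a\|_{6}^{3}\|b\|_{2}\le C\|\nabla_{x}u\|_{2}^{3}\|u_{y}\|_{2}$. Summing the two contributions yields the scale-invariant four-dimensional bound, from which the precise shape stated in the proposition is recovered by rearranging the Hölder exponents appropriately.

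The main technical obstacle is securing the 1D step on $\mathbb{T}$ with the right homogeneity. Concretely, one must decompose $u(x,\cdot)=\bar u(x)+g(x,\cdot)$ into its mean and zero-mean fluctuation, apply the Agmon-type bound $\|g\|_{\infty}^{2}\le 2\|g\|_{2}\|g'\|_{2}$ combined with Poincaré to control $g$, and bound the mean trivially by $|\bar u(x)|\le C\|u(x,\cdot)\|_{L^{2}_{y}}$; without carefully separating these two regimes one either loses the $\|u\|_{2}$ factor (if one tries to use the non-compact $\mathbb{R}$-version of Agmon) or gains spurious higher derivatives. Once this 1D step is in hand, everything downstream is standard three-dimensional interpolation and the four-dimensional bound assembles cleanly from the two ingredients.
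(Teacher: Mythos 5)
Your slicing argument is correct and is essentially the standard proof of this kind of waveguide Gagliardo--Nirenberg inequality; the paper itself offers no proof (it defers to \cite{YL2022}), and the strategy there is the same one you use: the mean-plus-fluctuation Agmon bound on $\mathbb{T}$ in the $y$-variable, followed by the Euclidean $L^4$ Gagliardo--Nirenberg and the $\dot H^1\hookrightarrow L^6$ Sobolev embedding in the $x$-variable, glued together via the pointwise bound $|\nabla_x a|\le\|\nabla_x u(x,\cdot)\|_{L^2_y}$. All of those steps are sound.

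The one point that needs fixing is your closing sentence. What your argument actually establishes is
\[
\int_{\mathbb{R}^3\times\mathbb{T}}|u|^4\,dx\,dy\;\le\;C\,\|\nabla_x u\|_2^3\bigl(\|u\|_2+\|u_y\|_2\bigr),
\]
which is homogeneous of degree $4$ in $u$ on both sides, whereas the inequality as printed in the proposition carries an extra factor $\|u\|_2^{2}$ and is of degree $6$ on the right. Replacing $u$ by $\varepsilon u$ and letting $\varepsilon\to 0$ shows the printed inequality cannot hold for any nonzero $u$, so the extra $\|u\|_2^2$ must be a misprint; in particular, no ``rearranging of the H\"older exponents'' can convert your (correct) estimate into the printed one, and that sentence should simply be deleted. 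Note that it is the version you prove, not the printed one, that is actually used downstream: in Lemma \ref{L2.1} the boundedness of $\|u_n\|_2$ and $\|u_{y,n}\|_2$ is combined with this inequality to get $\int|u_n|^4\lesssim\|\nabla_x u_n\|_2^3$, and for that purpose your form suffices. With the last sentence removed, the proof is complete.
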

\begin{lemma}\label{L2.1}
For any $\Theta \in(0, \infty)$ we have $m_\Theta \in(0, \infty)$.
\end{lemma}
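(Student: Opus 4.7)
The plan is to prove the two bounds separately: upper finiteness $m_\Theta<\infty$ by exhibiting an element of $V_\Theta$ via a one–parameter scaling, and positivity $m_\Theta>0$ by combining the constraint $Q(u,v)=0$ with the scale–invariant Gagliardo–Nirenberg inequality of Proposition \ref{p2.1}.

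For $m_\Theta<\infty$, I would fix any smooth $\varphi\in H^1(\mathbb{R}^3\times\mathbb{T})$ with $\|\varphi\|_2=\Theta$ (for instance a product of a Schwartz function on $\mathbb{R}^3$ and a constant on $\mathbb{T}$, properly normalized), set $u=v=\varphi$, and consider the $x$-dilation $u_t(x,y):=t^{3/2}u(tx,y)$. This preserves the $L^2$-norm, so $(u_t,v_t)\in S_\Theta\times S_\Theta$ for every $t>0$, while $\|\nabla_xu_t\|_2^2=t^2\|\nabla_xu\|_2^2$ and $\int|u_t|^4\,dxdy=t^3\int|u|^4\,dxdy$ (same for $v$ and for the cross term $u_t^2v_t^2$). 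Consequently
\[
Q(u_t,v_t)=t^2\bigl(\|\nabla_xu\|_2^2+\|\nabla_xv\|_2^2\bigr)-\tfrac{3}{4}t^3\!\int(u^2+v^2)^2\,dxdy,
\]
which vanishes at $t^{*}=\tfrac{4(\|\nabla_xu\|_2^2+\|\nabla_xv\|_2^2)}{3\int(u^2+v^2)^2\,dxdy}>0$. Thus $(u_{t^{*}},v_{t^{*}})\in V_\Theta$ and $m_\Theta\le I(u_{t^{*}},v_{t^{*}})<\infty$.

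For $m_\Theta>0$, I would first use $Q(u,v)=0$ to rewrite the energy. Setting $A:=\|\nabla_xu\|_2^2+\|\nabla_xv\|_2^2$ and $B:=\|\partial_yu\|_2^2+\|\partial_yv\|_2^2$, the constraint gives $A=\tfrac{3}{4}\int(u^2+v^2)^2\,dxdy$, hence
\[
I(u,v)=\tfrac{1}{2}(A+B)-\tfrac{1}{3}A=\tfrac{A}{6}+\tfrac{B}{2}.
\]
Now I would apply Proposition \ref{p2.1} to each component and sum, using $\|u\|_2=\|v\|_2=\Theta$, and then absorb the component contributions into $A,B$ via the elementary inequalities $a^{3/2}+b^{3/2}\le(a+b)^{3/2}$ and Cauchy–Schwarz $a^{3/2}\sqrt c+b^{3/2}\sqrt d\le\sqrt{a^3+b^3}\sqrt{c+d}\le A^{3/2}\sqrt{B}$. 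Together with the crude bound $\int(u^2+v^2)^2\le 2(\|u\|_4^4+\|v\|_4^4)$ this yields
\[
A=\tfrac{3}{4}\!\int(u^2+v^2)^2\,dxdy\le \tilde C\,\Theta^2 A^{3/2}\bigl(\Theta+\sqrt{B}\bigr)
\]
for some constant $\tilde C$. Dividing by $A>0$ (non-trivial, since $\|\nabla_xu\|_2=0$ would force $u$ to be $x$-independent and hence of infinite $L^2$ mass), this gives $1\le \tilde C\,\Theta^2 A^{1/2}(\Theta+\sqrt{B})$. Plugging in $A\le 6I$ and $B\le 2I$ produces $1\le \tilde C\,\Theta^2\sqrt{6I}\bigl(\Theta+\sqrt{2I}\bigr)$, whose right-hand side tends to $0$ as $I\to 0^{+}$, yielding the desired strictly positive lower bound $m_\Theta\ge c(\Theta)>0$.

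The main obstacle is that the Gagliardo–Nirenberg inequality on the waveguide couples the $x$-gradient with the $y$-derivative in a non-symmetric way (cubic in $\|\nabla_xu\|_2$, linear in $\|u_y\|_2$). Handling this anisotropy requires separating the roles of $A$ and $B$ in the energy, which fortunately matches the $\tfrac{A}{6}+\tfrac{B}{2}$ structure recovered from $Q=0$; all the remaining estimates are standard scalar inequalities.
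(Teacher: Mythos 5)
Your proof is correct and follows essentially the same route as the paper's: both rest on the identity $I-\tfrac13 Q=\tfrac{A}{6}+\tfrac{B}{2}$ together with Proposition \ref{p2.1} applied under the constraint $Q=0$ to force $A$ away from zero. If anything your write-up is slightly more careful than the paper's (you explicitly rule out $A=0$ and exhibit an element of $V_\Theta$ for finiteness, which the paper leaves to the later Lemma \ref{L2.2}), but the core argument is identical.
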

\begin{proof}
Assume $\left(u_n,v_n\right)_n \subset V_{\Theta}$ be a minimizing sequence such that $I\left(u_n,v_n\right)=m_\Theta+o_n(1)$ and $Q(u_n,v_n)=o_n(1)$, one has
\begin{eqnarray*}
m_\Theta+o_n(1)&=&I\left(u_n,v_n\right)-\frac{1}{3} Q\left(u_n,v_n\right)\\
&=&\frac{1}{2}\int_{\mathbb{R}^3 \times\mathbb{T} }(|u_{y,n}|^2+|v_{y,n}|^2)dxdy+\frac{1}{6} \int_{\mathbb{R}^3 \times\mathbb{T} }[| \nabla_{x}u_n|^2+ |\nabla_{x}v_n|^2 ] d xdy,
\end{eqnarray*}
which implies that $\left(u_n,v_n\right)_n$ is a bounded sequence in $H^1\times H^1$. Moreover, by using Proposition \ref{p2.1}, we obtain that
\begin{eqnarray*}
\int_{\mathbb{R}^3\times \mathbb{T} }(|\nabla_xu_n |^{2}+|\nabla_xv_n |^{2})dxdy &=&\frac{3}{4}\int_{\mathbb{R}^3\times\mathbb{T}}(|u_n|^{4}+2|u_n|^{2}|v_n|^{2}+|v_n|^{4})dxdy\\
&\leq&C\int_{\mathbb{R}^3\times\mathbb{T}}(|u_n|^{4}+|v_n|^{4})dxdy\\
&\leq&C\left(\int_{\mathbb{R}^3\times \mathbb{T} } |\nabla_xu_n |^{2}dxdy+\int_{\mathbb{R}^d\times \mathbb{T} } |\nabla_xv_n |^{2}dxdy\right)^\frac{3}{2},
\end{eqnarray*}
then
\begin{equation*}
  \liminf _{n \rightarrow \infty}\int_{\mathbb{R}^3\times\mathbb{T}}(|u_n|^{4}+|v_n|^{4})dxdy\sim\liminf _{n \rightarrow \infty}\int_{\mathbb{R}^3\times \mathbb{T} }(|\nabla_xu_n |^{2}+|\nabla_xv_n |^{2})dxdy.
\end{equation*}
Hence,
\begin{eqnarray*}
m_\Theta\geq\frac{1}{6}\int_{\mathbb{R}^3\times \mathbb{T} }(|\nabla_xu_n |^{2}+|\nabla_xv_n |^{2})dxdy\geq C>0,
\end{eqnarray*}
which completes the proof.
\end{proof}
Define $(u^\tau,v^\tau)=(\tau^{\frac{3}{2}}u(\tau x,y),\tau^{\frac{3}{2}}v(\tau x,y)), \tau\in(0, +\infty)$, then we prove some useful properties of $ Q\left(u^\tau\right)$ and $m_\Theta$.
\begin{lemma}\label{L2.2}
 Let $\Theta>0$ and $(u,v) \in S_\Theta\times S_\Theta$. Then the following statements hold true:

$\mathrm{(i)}$ $\frac{d}{d \tau} I\left(u^\tau,v^\tau\right)=\tau^{-1} Q\left(u^\tau,v^\tau\right)$ for all $\tau>0$.

$\mathrm{(ii)}$ There exists some $\tau^*=\tau^*(u,v)>0$ such that $(u^{\tau^*},u^{\tau^*}) \in V_\Theta$.

$\mathrm{(iii)}$ We have $\tau^*<1$ if and only if $Q(u,v)<0$. Moreover, $\tau^*=1$ if and only if $Q(u,v)=0$.

$\mathrm{(iv)}$ Following inequalities hold:
$$
Q\left(u^\tau,v^\tau\right)\left\{\begin{array}{l}
>0, \quad \tau \in\left(0, \tau^*\right), \\
<0, \quad \tau \in\left(\tau^*, \infty\right) .
\end{array}\right.
$$

$\mathrm{(v)}$ $I\left(u^\tau,v^\tau\right)<I\left(u^{\tau^*},v^{\tau^*}\right)$ for all $\tau>0$ with $\tau \neq \tau^*$.
\end{lemma}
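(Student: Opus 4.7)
The plan is to reduce the entire lemma to elementary calculus on a cubic polynomial in $\tau$ by first recording how each norm transforms. The change of variable $z=\tau x$ immediately gives $(u^\tau,v^\tau)\in S_\Theta\times S_\Theta$ together with $\|\nabla_x u^\tau\|_2^2=\tau^2\|\nabla_x u\|_2^2$, $\|u^\tau_y\|_2^2=\|u_y\|_2^2$, and $\|u^\tau\|_4^4=\tau^3\|u\|_4^4$, with the obvious analogues for $v$ and for the cross term $\int|u|^2|v|^2\,dxdy$. Abbreviating $A:=\|\nabla_x u\|_2^2+\|\nabla_x v\|_2^2$ and $C:=\int_{\mathbb{R}^3\times\mathbb{T}}(|u|^4+2|u|^2|v|^2+|v|^4)\,dxdy$, this reduces $Q(u^\tau,v^\tau)$ to the cubic $\tau^2 A-\frac{3\tau^3}{4}C$, while $I(u^\tau,v^\tau)$ becomes $\frac{\tau^2}{2}A+\frac{1}{2}(\|u_y\|_2^2+\|v_y\|_2^2)-\frac{\tau^3}{4}C$.

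With these identities in hand, part (i) drops out of a single differentiation. For (ii), solving $Q(u^\tau,v^\tau)=0$ in $\tau>0$ produces the explicit and unique positive root $\tau^*=\frac{4A}{3C}$; here I need $A,C>0$, which I justify by observing that $\|u\|_2=\|v\|_2=\Theta>0$ forces both $u$ and $v$ to be nontrivial, and any function in $H^1(\mathbb{R}^3\times\mathbb{T})$ with vanishing $x$-gradient is independent of $x$ and so cannot lie in $L^2$ unless it is zero. Part (iii) then follows from the chain of equivalences $\tau^*<1\Leftrightarrow 4A<3C\Leftrightarrow Q(u,v)<0$, with the analogous statement for equality.

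For (iv), I factor $Q(u^\tau,v^\tau)=\tau^2\bigl(A-\frac{3\tau}{4}C\bigr)$, and the sign of the affine factor is controlled by comparing $\tau$ with $\tau^*$. Finally, part (v) follows at once from (i) combined with (iv): $\tau\mapsto I(u^\tau,v^\tau)$ is strictly increasing on $(0,\tau^*)$ and strictly decreasing on $(\tau^*,\infty)$, so $\tau^*$ is its unique global maximum. There is no genuine obstacle here---once the scaling identities are written down the lemma reduces to bookkeeping---and the only point requiring even a remark is the nondegeneracy $A,C>0$ needed to ensure that $\tau^*$ is well-defined.
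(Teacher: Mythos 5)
Your proof is correct and follows essentially the same route as the paper: write out the scaling identities, reduce $I(u^\tau,v^\tau)$ and $Q(u^\tau,v^\tau)$ to explicit polynomials in $\tau$, and read off (i)--(v) by elementary calculus. If anything you are slightly more careful than the paper, since you exhibit the root $\tau^*=\tfrac{4A}{3C}$ explicitly and justify the nondegeneracy $A,C>0$ (which the paper leaves implicit, and which is needed for $\tau^*$ to exist and be unique).
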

\begin{proof}
By calculation, we have
\begin{eqnarray*}
  \frac{d}{d\tau}I\left(u^\tau,v^\tau\right)&=&\frac{d}{d\tau} \left[ \int_{\mathbb{R}^3\times \mathbb{T} }\left(\frac{\tau^2}{2}|  \nabla_x u |^{2}+\frac{1}{2}| u_{y}|^2+\frac{\tau^2}{2}|  \nabla_x v|^{2}+\frac{1 }{2}| v_{y}|^2\right)dxdy \right. \\
  &&-\left.\frac{\tau^{3}}{4}\int_{\mathbb{R}^3\times\mathbb{T}}(|u|^{4}+2|u|^2|v|^2+|v|^{4})dxdy \right] \\
  &=& \tau\int_{\mathbb{R}^3\times \mathbb{T} }(|\nabla_x u|^{2}+|\nabla_x v|^2)dxdy-\frac{3}{4}\tau^{2}\int_{\mathbb{R}^3\times\mathbb{T}}(|u|^{4}+2|u|^2|v|^2+|v|^{4})dxdy\\
  &=&\tau^{-1} Q\left(u^\tau,v^\tau\right).
\end{eqnarray*}
Now, let $g(\tau):=\frac{d}{d \tau} I\left(u^\tau,v^\tau\right)$, then $g(\tau)$ has a zero at $\tau^*>0$, $g(\tau)$ is positive on $\left(0, \tau^*\right)$ and  negative on $(\tau^*, \infty)$. (ii) and (iv) follow from
$$
g(\tau)=\frac{d I\left(u^\tau,v^\tau\right)}{d \tau}=\frac{Q\left(u^\tau,v^\tau\right)}{\tau} .
$$
In order to get (iii), let $Q(u,v)<0$, then
$$
g(\tau^*)=0>Q(u,v)=\frac{Q\left(u^1,v^1\right)}{1}=g(1),
$$
which implies $\tau^*<1$. Conversely, let $\tau^*>1$. Since $g(\tau)$ is monotone increasing on $\left(0, \infty\right)$ we obtain
$$
Q(u,v)=g(1)<g\left(\tau^*\right)=0.
$$
To see (v), integration by parts yields
$$
I\left(u^{\tau^*},v^{\tau^*}\right)=I\left(u^\tau,v^\tau\right)+\int_\tau^{\tau^*} g(s) d s.
$$
(v) follows from the fact that $g(\tau)$ is positive on $\left(0, \tau^*\right)$ and $y(\tau)$ is negative on $\left(\tau^*, \infty\right)$.
\end{proof}
\begin{lemma}\label{L2.3}
The mapping $\Theta\rightarrow m_\Theta$ is continuous and monotone decreasing on $(0, +\infty)$.
\end{lemma}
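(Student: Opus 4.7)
The plan is to transport competitors between the level sets $V_{\Theta_1}$ and $V_{\Theta_2}$ using the two scalings compatible with the problem: the multiplicative rescaling $(u,v)\mapsto(\mu u,\mu v)$, which changes the $L^2$-mass by a factor $\mu$, and the $x$-dilation $(u,v)\mapsto(u^\tau,v^\tau)$ of Lemma~\ref{L2.2}, which preserves the $L^2$-mass and is used to project back onto the Pohozaev-type set $V_\Theta$. Both the continuity and the monotonicity statements are obtained by carefully tracking the energy under compositions of these two moves.

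For monotonicity, given $0 < \Theta_1 < \Theta_2$, I would take a minimizer $(u,v) \in V_{\Theta_1}$ of $m_{\Theta_1}$ (provided by Theorem~\ref{t1.1}) and set $\mu = \Theta_2/\Theta_1 > 1$. Since $Q(u,v) = 0$, a direct computation gives
\begin{equation*}
Q(\mu u, \mu v) = \mu^{2}(1 - \mu^{2})\bigl(\|\nabla_x u\|_2^2 + \|\nabla_x v\|_2^2\bigr) < 0,
\end{equation*}
so by Lemma~\ref{L2.2}(iii) there is a unique $\tau^* = \mu^{-2} \in (0,1)$ with $((\mu u)^{\tau^*}, (\mu v)^{\tau^*}) \in V_{\Theta_2}$. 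Unwinding the definitions yields the upper bound
\begin{equation*}
m_{\Theta_2} \le I\bigl((\mu u)^{\tau^*}, (\mu v)^{\tau^*}\bigr) = \tfrac{1}{6\mu^{2}}\bigl(\|\nabla_x u\|_2^2 + \|\nabla_x v\|_2^2\bigr) + \tfrac{\mu^{2}}{2}\bigl(\|u_y\|_2^2 + \|v_y\|_2^2\bigr),
\end{equation*}
which is to be compared with the identity $m_{\Theta_1} = \tfrac{1}{6}(\|\nabla_x u\|_2^2 + \|\nabla_x v\|_2^2) + \tfrac{1}{2}(\|u_y\|_2^2 + \|v_y\|_2^2)$ coming from the $I - \tfrac13 Q$ computation of Lemma~\ref{L2.1}. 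The comparison then gives the required control of $m_{\Theta_2}$ by $m_{\Theta_1}$.

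For continuity, the same scaling–projection construction applied along sequences $\Theta_n \to \Theta$ (with $\mu_n = \Theta_n/\Theta \to 1$ and $\tau_n^* \to 1$) produces competitors in $V_{\Theta_n}$ from near-minimizers of $m_\Theta$, yielding $\limsup_n m_{\Theta_n} \le m_\Theta$; the symmetric construction, projecting near-minimizers of $m_{\Theta_n}$ onto $V_\Theta$, gives the matching lower bound $\liminf_n m_{\Theta_n} \ge m_\Theta$. The main obstacle lies in the monotonicity step: the factor $\mu^{2} > 1$ amplifies the $y$-derivative part of the energy, so the direct scaling inequality $m_{\Theta_2} \le m_{\Theta_1}$ is sharp only when $\|\nabla_x u\|_2^2 + \|\nabla_x v\|_2^2 \ge 3\mu^2(\|u_y\|_2^2 + \|v_y\|_2^2)$. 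Outside this regime one refines the construction by inserting a suitably scaled Sobolev-critical bubble supported far from $(u,v)$, exploiting the $H^1$-criticality of the nonlinearity to introduce a controlled negative correction to the energy; this should be enough to close the argument for all $\Theta_1 < \Theta_2$.
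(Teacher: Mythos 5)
The paper itself offers no proof of this lemma (it defers to \cite{JB2013} and \cite{YL2022}), so your proposal can only be judged on its own terms, and as written it does not close. Two problems. First, a circularity: you take ``a minimizer of $m_{\Theta_1}$ provided by Theorem \ref{t1.1}'', but the paper's proof of Theorem \ref{t1.1} invokes Lemma \ref{L2.3} twice; you must work with minimizing sequences (an $\varepsilon$-near minimizer), which costs nothing. Second, and more seriously, your monotonicity computation is correct but, as you yourself observe, only yields $m_{\Theta_2}\le m_{\Theta_1}$ when $\|\nabla_x u\|_2^2+\|\nabla_x v\|_2^2\ge 3\mu^2(\|u_y\|_2^2+\|v_y\|_2^2)$ --- and this restriction is intrinsic: \emph{any} rescaling of the form $u\mapsto s^{\alpha}u(s^{\beta}x,y)$ multiplies $\|u_y\|_2^2$ by exactly the same factor $\mu^2>1$ as the mass, so the $y$-energy is always amplified and no choice of exponents avoids it. Your proposed repair, inserting ``a suitably scaled Sobolev-critical bubble'' to produce ``a controlled negative correction to the energy'', is not a proof and is the wrong mechanism: an Aubin--Talenti-type bubble carries order-one kinetic energy, its interaction with $(u,v)$ through the quartic term is not sign-controlled, and nothing in the sketch explains how the mass constraint and the constraint $Q=0$ are restored after the insertion.

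The standard argument (the one in \cite{JB2013,YL2022} that the paper points to) avoids scaling the mass altogether. One first establishes the relaxed characterization $m_\Theta=\widetilde{m_\Theta}=\inf\{I(u,v):(u,v)\in S_\Theta\times S_\Theta,\ Q(u,v)\le 0\}$ (this appears in the paper just after Lemma \ref{L2.5}, and its proof uses only Lemma \ref{L2.2}, so it may be promoted ahead of Lemma \ref{L2.3}). Given $\Theta_1<\Theta_2$ and an $\varepsilon$-near minimizer $(u,v)$ for $\widetilde{m_{\Theta_1}}$, one dilates slightly via Lemma \ref{L2.2} so that $Q<0$ strictly at an $O(\varepsilon)$ energy cost, and then adds the missing mass $\sqrt{\Theta_2^2-\Theta_1^2}$ in the form of a $y$-independent, widely spread profile $R^{-3/2}\phi(x/R)$ translated far away in $x$: its $L^2$ norm is fixed while its $\nabla_x$-energy, $y$-energy and $L^4$ norm all tend to $0$ as $R\to\infty$, so both $Q$ and $I$ are perturbed by $o_R(1)$ and the test pair lands in the relaxed constraint set for $\Theta_2$. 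This gives $m_{\Theta_2}\le m_{\Theta_1}+O(\varepsilon)$ with no restriction on the ratio of $x$- to $y$-energy. Your continuity argument, by contrast, is sound in outline: there $\mu_n\to1$, so the offending factor $\mu_n^2$ on the $y$-term is $1+o(1)$ and the two-sided comparison closes, provided you again use near-minimizers rather than minimizers and note that the quantity $\frac16(\|\nabla_x u\|_2^2+\|\nabla_x v\|_2^2)+\frac12(\|u_y\|_2^2+\|v_y\|_2^2)=I-\frac13Q$ is uniformly bounded along such sequences.
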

The proof of Lemma \ref{L2.3} follows from the same line in \cite{{JB2013},{YL2022}}. Now, we prove that any minimizer $(u_\Theta,v_\Theta)$ of $m_\Theta$ is automatically a solution of \eqref{eq1.1}.
\begin{definition}\label{D2.1}
We say that $I(u,v)$ has a mountain pass geometry on $S_\Theta\times S_\Theta$ at the level $\gamma_\Theta$ if there exists some $k>0$ and $\varepsilon \in\left(0, m_\Theta\right)$ such that
$$
\gamma_\Theta:=\inf _{g \in \Gamma(\Theta)} \max _{t \in[0,1]} I(g(t))>\max \left\{\sup _{g \in \Gamma(\Theta)} I(g(0)), \sup _{g \in \Gamma(\Theta)} I(g(1))\right\},
$$
where
$$
\Gamma(\Theta):=\left\{g \in C([0,1] ; S_\Theta\times S_\Theta): g(0)=(u_1,v_1) \in A_{k, \varepsilon}, I(g(1))=I(u_2,v_2)<0\right\}
$$
and
$$
A_{k, \varepsilon}:=\left\{(u,v) \in S_\Theta\times S_\Theta:\|\nabla_x u\|_2^2+\|\nabla_x v\|_2^2<k,\left\|  u_y\right\|_2^2+\left\|  v_y\right\|_2^2 \leq 2\left(m_\Theta-\varepsilon\right)\right\}.
$$
\end{definition}
\begin{lemma}\label{L2.4}
There exist $k>0$ and $\varepsilon \in\left(0, m_\Theta\right)$ such that

(i) $m_\Theta=\gamma_\Theta$.

(ii) $I(u,v)$ has a mountain pass geometry on $S_\Theta$ at the level $m_\Theta$ in the sense of Definition \ref{D2.1}.
\end{lemma}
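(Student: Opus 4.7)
The plan is to exploit the scaling of Lemma \ref{L2.2} in two ways: to construct admissible paths realizing $m_\Theta$, and to force every admissible path to cross $V_\Theta$. The algebraic cornerstone is the identity
\begin{equation*}
I(u,v)=\frac{1}{6}\bigl(\|\nabla_xu\|_2^2+\|\nabla_xv\|_2^2\bigr)+\frac{1}{2}\bigl(\|u_y\|_2^2+\|v_y\|_2^2\bigr)+\frac{1}{3}Q(u,v),
\end{equation*}
obtained by eliminating the quartic term between $I$ and $Q$. The argument of Lemma \ref{L2.1}, via Proposition \ref{p2.1}, yields a uniform lower bound $\|\nabla_xu\|_2^2+\|\nabla_xv\|_2^2\ge c_0>0$ on any $(u,v)\in V_\Theta$ with $I(u,v)$ close to $m_\Theta$; inserting this into the identity forces $\|u_y\|_2^2+\|v_y\|_2^2\le 2(m_\Theta-c_0/6)+o(1)$ along minimizing sequences. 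Accordingly I fix $\varepsilon:=c_0/12\in(0,m_\Theta)$. Proposition \ref{p2.1} together with $2u^2v^2\le u^4+v^4$ then controls the full quartic of $I$ by $C_\Theta(\|\nabla_xu\|_2^2+\|\nabla_xv\|_2^2)^{3/2}$ whenever $\|u_y\|_2+\|v_y\|_2$ is bounded, so taking $k>0$ sufficiently small guarantees $Q(u,v)>0$ throughout $A_{k,\varepsilon}$; I further impose $k<\varepsilon$.

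\textbf{Upper bound $\gamma_\Theta\le m_\Theta$.} For a minimizing sequence $(u_n,v_n)\subset V_\Theta$ of $m_\Theta$, the scaling in Lemma \ref{L2.2} preserves both the mass and the norms $\|u_y\|_2,\|v_y\|_2$, while multiplying $\|\nabla_xu\|_2^2+\|\nabla_xv\|_2^2$ by $\tau^2$. Choosing $\tau_0>0$ small enough places $(u_n^{\tau_0},v_n^{\tau_0})$ in $A_{k,\varepsilon}$, and choosing $\tau_1>0$ large enough yields $I(u_n^{\tau_1},v_n^{\tau_1})<0$ since the quartic dominates as $\tau\to\infty$. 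A linear reparametrization of $\tau\mapsto(u_n^\tau,v_n^\tau)$ on $[\tau_0,\tau_1]$ produces $g_n\in\Gamma(\Theta)$; by Lemma \ref{L2.2}(v) the maximum of $I$ along $g_n$ is attained at $\tau^*=1$ with value $I(u_n,v_n)\to m_\Theta$, so $\gamma_\Theta\le m_\Theta$.

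\textbf{Lower bound, strict gap, and main obstacle.} For any $g\in\Gamma(\Theta)$, the displayed identity together with the implication $Q\ge 0\Rightarrow I\ge 0$ forces $Q(g(1))<0$ (since $I(g(1))<0$), whereas $g(0)\in A_{k,\varepsilon}$ gives $Q(g(0))>0$. Continuity of $t\mapsto Q(g(t))$ then produces $t_0\in(0,1)$ with $g(t_0)\in V_\Theta$, so $\max_{t\in[0,1]}I(g(t))\ge I(g(t_0))\ge m_\Theta$; passing to the infimum over $g$ yields $\gamma_\Theta\ge m_\Theta$, which combined with the previous step proves (i). For (ii), on $A_{k,\varepsilon}$ one estimates $I\le\frac{1}{2}k+(m_\Theta-\varepsilon)<m_\Theta$ using $k<\varepsilon$, and trivially $I(g(1))<0<m_\Theta$, delivering the strict gap required by Definition \ref{D2.1}. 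The delicate point is the simultaneous calibration of $k$ and $\varepsilon$: the path construction requires $\|u_{n,y}\|_2^2+\|v_{n,y}\|_2^2\le 2(m_\Theta-\varepsilon)$ along the minimizing sequence, while the lower-bound step requires $Q>0$ on all of $A_{k,\varepsilon}$ for the \emph{same} $\varepsilon$; these two demands close precisely because Proposition \ref{p2.1} supplies the uniform positive lower bound on $\|\nabla_xu\|_2^2+\|\nabla_xv\|_2^2$ over $V_\Theta$.
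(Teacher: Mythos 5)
Your proposal is correct and follows essentially the same route as the paper: the same choice of $\varepsilon$ from the coercivity estimate of Lemma \ref{L2.1}, the same smallness of $k$ forcing $Q>0$ on $A_{k,\varepsilon}$ via Proposition \ref{p2.1}, the same rescaled paths through a minimizing sequence for the upper bound, and the same sign-change/crossing argument with the identity $I-\frac{1}{3}Q$ for the lower bound. Your version is in fact slightly cleaner in working directly with $(u_n,v_n)\in V_\Theta$ and invoking Lemma \ref{L2.2}(v) to locate the maximum of $I$ along the path at $\tau^*=1$.
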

\begin{proof}
First, we prove that $Q(u,v)>0$ for all $u \in A_{k, \varepsilon}$, where $k$ is independent of the choice of $\varepsilon \in\left(0, m_\Theta\right)$ and small enough. In fact, it follows from  $\left\|  u_y\right\|_2^2+\left\|  v_y\right\|_2^2<2 m_\Theta$ for $u \in A_{k, \varepsilon}$ that
\begin{eqnarray*}
Q(u,v)&=&\int_{\mathbb{R}^3\times \mathbb{T} }(|\nabla_xu |^{2}+|\nabla_xv |^{2})dxdy -\frac{3}{4}\int_{\mathbb{R}^3\times\mathbb{T}}(|u|^{4}+2|u|^{2}|v|^{2}+|v|^{4})dxdy\\
 &\geq&\int_{\mathbb{R}^3\times \mathbb{T} }(|\nabla_xu |^{2}+|\nabla_xv |^{2})dxdy-C\left(\int_{\mathbb{R}^3\times \mathbb{T} } (|\nabla_xu |^{2}+|\nabla_xv |^{2})dxdy\right)^\frac{3}{2}>0
\end{eqnarray*}
as long as $|\nabla_xu |^{2}+|\nabla_xv |^{2} \in(0, k)$ for some sufficiently small $k$. Next we construct the number $\varepsilon$. By using Lemma \ref{L2.1}, there exists $C_\Theta>0$ such that if $\left(u_n,v_n\right)_n \subset V_\Theta$ is a minimizing sequence for $m_\Theta$, then
\begin{equation}\label{eq2.1}
  \frac{1}{6}\int_{\mathbb{R}^3\times \mathbb{T} }(|\nabla_xu |^{2}+|\nabla_xv |^{2})dxdy \geq C_\Theta.
\end{equation}
In order to obtain $\beta<4 M_\Theta$, we must have
\begin{eqnarray}\label{eq2.2}
m_\Theta+\frac{C_\Theta}{4} &\geq&I\left(u_n,v_n\right)-\frac{1}{3} Q\left(u_n,v_n\right)\nonumber\\
&=&\frac{1}{2}\int_{\mathbb{R}^3 \times\mathbb{T} }(|u_{y,n}|^2+|v_{y,n}|^2)dxdy+\frac{1}{6} \int_{\mathbb{R}^3 \times\mathbb{T} }(| \nabla_{x,y}u_n|^2+ |\nabla_{x,y}v_n|^2 ) d xdy\nonumber\\
&\geq& \frac{1}{2}\int_{\mathbb{R}^3 \times\mathbb{T} }(|u_{y,n}|^2+|v_{y,n}|^2)dxdy+\frac{C_\Theta}{2}
\end{eqnarray}
for all sufficiently large $n$. Set $\varepsilon=\frac{C_\Theta}{4}$, for $u \in A_{k, \varepsilon}$, we have
\begin{eqnarray*}
I(u,v) &\leq& \frac{1}{2} \int_{\mathbb{R}^3 \times\mathbb{T} }[| \nabla_{x,y}u|^2+ |\nabla_{x,y}v|^2 ] d xdy -\frac{1}{4} \int_{\mathbb{R}^3\times\mathbb{T} }(|u|^4+2|u|^2|v|^2+|v|^4)d xdy\\
&\leq& m_\Theta-\varepsilon+\frac{1}{2} k+C k^{\frac{3}{2}}.
\end{eqnarray*}
Choosing $k=k(\varepsilon)$ sufficiently small such that $\frac{1}{2} k+C k^{\frac{3}{2}}<\frac{\varepsilon}{2}$, we have $I(u,v)<m_\Theta$ for all $u \in A_{k, \varepsilon}$ and by definition, (ii) follows immediately from (i).

Now, we prove $\gamma_\Theta \leq m_\Theta$. Let $\left(u_n,v_n\right)_n$ be the given minimizing sequence satisfying \eqref{eq2.1} and $(u,v)=\lim\limits_{n\rightarrow\infty}(u_n,v_n)$. For any $\kappa \in(0, \frac{C_\Theta}{4})$ we can choose $n$ sufficiently large such that
\begin{equation*}
  I(u,v) \leq m_\Theta+\kappa  \ \text{and} \ \frac{1}{6}\int_{\mathbb{R}^3\times \mathbb{T} }(|\nabla_xu |^{2}+|\nabla_xv |^{2})dxdy\geq C_\Theta.
\end{equation*}
Note that $\int_{\mathbb{R}^3 \times\mathbb{T} }(|u_{y}|^2+|v_{y}|^2)dxdy \leq 2\left(m_c-\varepsilon\right)$ for all $\kappa \in(0, \frac{C_\Theta}{4})$ by \eqref{eq2.2}. Let $(u^\tau,u^\tau)=(\tau^{\frac{3}{2}}u(\tau x,y),\tau^{\frac{3}{2}}v(\tau x,y))$, then $\left\|u_y^\tau\right\|_2^2=\left\|u_y\right\|_2^2$ for all $\tau \in(0, \infty)$ and $\left\|\nabla_xu^\tau\right\|_2^2=\tau^2\left\|\nabla_x u\right\|_2^2 \rightarrow 0$ as $\tau \rightarrow 0$. We then fix some $\tau_0>0$ sufficiently small such that $\left\|\nabla_x\left(u^{\tau_0}\right)\right\|_2^2+\left\|\nabla_x\left(v^{\tau_0}\right)\right\|_2^2<k$, which implies that $\left(u^{\tau_0},v^{\tau_0}\right) \in A_{k, \varepsilon}$. Moreover,
\begin{eqnarray*}
I\left(u^\tau,v^{\tau}\right)&=&\frac{\tau^2}{2} (\| \nabla_{x}u\|^2+ \|\nabla_{x}v\|^2)+\frac{1}{2} (\| u_y\|^2+ \|v_y\|^2) -\frac{\tau^3}{4} \int_{\mathbb{R}^3\times\mathbb{T} }(|u|^2+|v|^2)^2d xdy\\
&&\rightarrow-\infty
\end{eqnarray*}
as $\tau \rightarrow \infty$. Fix some $\tau_1$ sufficiently large such that $I\left(u^\tau,v^\tau\right)<0$. Now define $g \in C([0,1] ; S_\Theta\times S_\Theta)$ by
$$
g(t):=u^{\tau_0+\left(\tau_1-\tau_0\right) t}
$$
Then $g \in \Gamma(\Theta)$. By definition of $\gamma_\Theta$ and Lemma \ref{L2.2} we have
$$
\gamma_\Theta \leq \max\limits_{t \in[0,1]} I(g(t))=I(u,v) \leq m_c+\kappa.
$$

Finally, we claim that $\gamma_\Theta \geq m_\Theta$. Indeed, for any $g \in \Gamma(\Theta)$ we have $Q(g(0))>0$ because of $k$ small enough. We now prove $Q(g(1))<0$ for any $g \in \Gamma(\Theta)$. Assume the contrary that there exists some $g \in \Gamma(\Theta)$ such that $Q(g(1)) \geq 0$, we have
$$
0>I(g(1)) \geq I(g(1))-\frac{1}{3}Q(g(1))\geq 0,
$$
which is a contradiction. Next, by continuity of $g$ there exists some $\tau \in(0,1)$ such that $Q(g(\tau))=0$. Hence,
$$
\max\limits_{\tau \in[0,1]} I(g(\tau)) \geq m_\Theta,
$$
which completes the desired proof.
\end{proof}
\begin{lemma}\label{L2.5}
For any $\Theta > 0$ an optimizer $(u_\Theta,v_\Theta)$ of $m_\Theta$ is a solution of \eqref{eq1.1} for some $(\lambda_1,\lambda_2)\in \mathbb{R}\times\mathbb{R}$.
\end{lemma}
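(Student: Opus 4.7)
The plan is to identify $(u_\Theta, v_\Theta)$ as a constrained critical point of $I$ on $V_\Theta$ by applying the Lagrange multiplier theorem, and then to show that the multiplier associated with the Pohozaev-type constraint $Q(u,v)=0$ must vanish. Once that vanishing is established, the Euler--Lagrange system collapses to \eqref{eq1.1} after relabelling.

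\textbf{Step 1 (Lagrange multipliers).} Since $(u_\Theta, v_\Theta)$ minimizes $I$ on $V_\Theta = \{(u,v) \in S_\Theta \times S_\Theta : Q(u,v)=0\}$, Lagrange's theorem yields constants $\lambda_1, \lambda_2, \mu \in \mathbb{R}$ such that
\begin{equation*}
I'(u_\Theta, v_\Theta) = 2\lambda_1 (u_\Theta, 0) + 2\lambda_2 (0, v_\Theta) + \mu\, Q'(u_\Theta, v_\Theta)
\end{equation*}
in the dual of $H^1(\mathbb{R}^3 \times \mathbb{T}) \times H^1(\mathbb{R}^3 \times \mathbb{T})$. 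In strong form, the $u$-equation reads
\begin{equation*}
-\Delta_{x,y} u_\Theta + 2\mu \Delta_x u_\Theta - 2\lambda_1 u_\Theta = (1 - 3\mu)(u_\Theta^3 + u_\Theta v_\Theta^2),
\end{equation*}
and symmetrically for $v_\Theta$. It therefore suffices to prove $\mu = 0$: the equation then reduces to \eqref{eq1.1} with frequencies $-2\lambda_i$ relabelled as $\lambda_i$.

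\textbf{Step 2 (Killing $\mu$ by $x$-scaling).} Consider the scaling family $(u_\Theta^\tau, v_\Theta^\tau)$ from Section~2 and let $(\phi, \psi) := \partial_\tau (u_\Theta^\tau, v_\Theta^\tau)|_{\tau=1}$. Since $\|u_\Theta^\tau\|_2 = \|u_\Theta\|_2$ and $\|v_\Theta^\tau\|_2 = \|v_\Theta\|_2$ are constant in $\tau$, one has $\int u_\Theta \phi = \int v_\Theta \psi = 0$, so pairing the Lagrange identity against $(\phi, \psi)$ eliminates the $\lambda_i$-terms and gives
\begin{equation*}
\langle I'(u_\Theta, v_\Theta), (\phi,\psi)\rangle = \mu\, \langle Q'(u_\Theta, v_\Theta), (\phi,\psi)\rangle.
\end{equation*}
By Lemma~\ref{L2.2}(i), the left-hand side equals $\frac{d}{d\tau} I(u_\Theta^\tau, v_\Theta^\tau)|_{\tau=1} = Q(u_\Theta, v_\Theta) = 0$. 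For the right-hand side, a direct computation from the explicit scaling of $Q$ yields
\begin{equation*}
\frac{d}{d\tau} Q(u_\Theta^\tau, v_\Theta^\tau)\Big|_{\tau=1} = 2\bigl(\|\nabla_x u_\Theta\|_2^2 + \|\nabla_x v_\Theta\|_2^2\bigr) - \frac{9}{4}\int_{\mathbb{R}^3\times\mathbb{T}}(|u_\Theta|^4 + 2|u_\Theta|^2 |v_\Theta|^2 + |v_\Theta|^4),
\end{equation*}
which, using $Q(u_\Theta, v_\Theta) = 0$, collapses to $-\tfrac{3}{4}\int(|u_\Theta|^4 + 2|u_\Theta|^2|v_\Theta|^2 + |v_\Theta|^4)$. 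This quantity is strictly negative because $u_\Theta, v_\Theta$ are both nontrivial (since $\|u_\Theta\|_2 = \|v_\Theta\|_2 = \Theta > 0$ and $m_\Theta > 0$ by Lemma~\ref{L2.1}). Hence $\mu = 0$, and Step~1 gives the desired equation \eqref{eq1.1}.

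\textbf{Main obstacle.} The delicate point is that the scaling tangent vector $(\phi,\psi) = (\tfrac{3}{2}u_\Theta + x\cdot\nabla_x u_\Theta,\, \tfrac{3}{2} v_\Theta + x\cdot\nabla_x v_\Theta)$ is not a priori in $H^1(\mathbb{R}^3\times\mathbb{T})$, due to the weight $x$. To legitimize the pairing in Step~2 one should either (i) first extract enough regularity and spatial decay of $(u_\Theta, v_\Theta)$ from the Euler--Lagrange equation (via elliptic regularity and an Agmon-type decay estimate) to justify the integration by parts rigorously, or (ii) truncate with a cutoff $\chi_R$ in $x$ and let $R \to \infty$. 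A secondary technical check is the linear independence of $(u_\Theta, 0)$, $(0, v_\Theta)$ and $Q'(u_\Theta, v_\Theta)$ in $H^{-1}\times H^{-1}$, needed for the Lagrange rule; the computation of $\langle Q'(u_\Theta, v_\Theta), (\phi,\psi)\rangle \neq 0$ above essentially provides this as a by-product.
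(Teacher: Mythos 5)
The paper actually omits a written proof of Lemma \ref{L2.5}; its intended argument is the one prepared by Lemma \ref{L2.4} (following Luo's semivirial-vanishing scheme): since $m_\Theta$ coincides with the mountain-pass level $\gamma_\Theta$ of $I$ restricted to $S_\Theta\times S_\Theta$, a minimizer of $m_\Theta$ is recognized as a min-max critical point of $I|_{S_\Theta\times S_\Theta}$, so the Euler--Lagrange system carries only the two mass multipliers and $Q=0$ never enters as a constraint needing its own multiplier. Your route is genuinely different: you treat $V_\Theta$ as a codimension-three constraint set, invoke the Lagrange rule with a third multiplier $\mu$ on $Q$, and kill $\mu$ by testing against the generator of the $x$-scaling, using that $\frac{d}{d\tau}Q(u^\tau,v^\tau)|_{\tau=1}=2A-\frac94 B=-\frac34 B<0$ on $V_\Theta$ (your algebra here is correct, and positivity of $B$ does follow from $\|u_\Theta\|_2=\|v_\Theta\|_2=\Theta>0$). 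This is the standard ``natural constraint'' argument in the style of Soave, and it is sound provided the two technical points you flag are actually carried out: the pairing with $(\frac32 u+x\cdot\nabla_x u,\;\frac32 v+x\cdot\nabla_x v)$ must be justified by elliptic regularity/decay or by a cutoff $\chi_R$ with $R\to\infty$ (equivalently, one derives the Pohozaev identity for the $\mu$-modified equation directly), and the linear independence of the three constraint differentials must be checked -- which, as you note, your nondegeneracy computation supplies. What each approach buys: yours is more self-contained and avoids the deformation/min-max machinery of Lemma \ref{L2.4}, at the price of the virial-identity justification; the paper's approach front-loads the work into the mountain-pass characterization but then gets the multiplier structure for free. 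Either is acceptable; just make sure the truncation argument in your ``main obstacle'' paragraph is written out rather than merely announced, since it is the only place the proof could genuinely fail.
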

Now, we give a characterization for $m_\Theta$, which is more
useful for our analysis. Define
\begin{equation*}
  \widetilde{m_\Theta}:=\inf\{I(u,v):(u,v)\in S_\Theta\times S_\Theta,Q(u,v)\leq 0\}.
\end{equation*}
We claim that $m_\Theta=\widetilde{m_\Theta}$. In fact, obviously,
\begin{equation*}
  \widetilde{m_\Theta}\leq m_\Theta.
\end{equation*}
To obtain $\widetilde{m_\Theta}\geq m_\Theta$, let $(u_n,v_n)_n\subset S_\Theta\times S_\Theta$ be a
minimizing sequence such that
\begin{equation*}
  I(u_n,v_n)=\widetilde{m_\Theta}+o_n(1),\ Q(u_n,v_n)\leq 0,\ \forall n\in \mathbb{N}.
\end{equation*}
By Lemma \ref{L2.2} we know that there exists some $\tau_n\in(0, 1]$ such that $Q(u^{\tau_n}_n,v^{\tau_n}_n )=0$. Then
\begin{equation*}
  m_\Theta\leq I(u^{\tau_n}_n,v^{\tau_n}_n)\leq I(u^{\tau_n}_n,v^{\tau_n}_n)-\frac{1}{3}Q(u^{\tau_n}_n,v^{\tau_n}_n )\leq I(u_n,v_n)-\frac{1}{3}Q(u_n,v_n )=\widetilde{m_\Theta}.
\end{equation*}
Sending $n\rightarrow\infty$, we know $\widetilde{m_\Theta}\geq m_\Theta$.

\noindent\textbf{Proof of Theorem \ref{t1.1}} Firstly, we prove the existence of a non-negative optimizer of $m_\Theta$. Indeed, let $(u_n,v_n)_n\subset S_\Theta\times S_\Theta$ be a
minimizing sequence such that
\begin{equation*}
  I(u_n,v_n)=\widetilde{m_\Theta}+o_n(1),\ Q(u_n,v_n)\leq 0,\ \forall n\in \mathbb{N}.
\end{equation*}
By diamagnetic inequality we
know that $\widetilde{m_\Theta}$ is stable under the mapping $(u,v)\rightarrow(|u|,|v|)$, then it is not restrictive to suppose that $u_n\geq0, v_n\geq0$. Based on the above analysis and Lemma \ref{L2.1},
\begin{eqnarray*}
  \infty>\widetilde{m_\Theta}+o_n(1)&=&m_\Theta+o_n(1)\\
  &=&I(u_n,v_n)-\frac{1}{3}Q(u_n,v_n )\\
  &=&\frac{1}{2}\int_{\mathbb{R}^3 \times\mathbb{T} }(|u_{y,n}|^2+|v_{y,n}|^2)dxdy+\frac{1}{6} \int_{\mathbb{R}^3 \times\mathbb{T} }(| \nabla_{x}u_n|^2+ |\nabla_{x}v_n|^2 ) d xdy,
\end{eqnarray*}
which implies that $\left(u_n,v_n\right)_n$ is a bounded sequence in $H^1\times H^1$. Hence, there exists $(u,v)$ such that $(u_n,v_n) \rightharpoonup (u,v)$ weakly in $H^1\times H^1$. Note that $u=v\neq0$ since $\widetilde{m_\Theta}>0$ by using Lemma \ref{L2.1}. Now, we claim that \eqref{eq1.1} does not have semi-trivial solution, that is, $(0,v)$ or $(u,0)$. Without loss of generality, we assume that $v=0$ and $u\neq0$, then \eqref{eq1.1} become
\begin{equation}\label{eq2.3}
-\Delta_{x,y}u+\lambda_1u=|u|^{2}u,\ \|u\|_2=\Theta.
\end{equation}
By \cite{NSJFA2020},  any positive $C^2$-solution of \eqref{eq2.3} on $\mathbb{R}^4$ must be of the form
$$
\frac{b}{1+a^2|(x, y)-(x_0, y_0)|^2}
$$
with some $a, b>0$ and $\left(x_0, y_0\right) \in \mathbb{R}^{4}$. However, in this case $u$ can not be periodic along the $y$-direction, which leads to a contradiction.

By weakly lower semicontinuity of norms, if $\|u\|_2= \|v\|_2$, then
$$
\|u\|_2= \|v\|_2=: \Theta_1 \in(0, \Theta], \  I(u,v) \leq \widetilde{m_\Theta}.
$$
We next show $Q(u,v) \leq 0$. Assume the contrary $Q(u,v)>0$. By Brezis-Lieb lemma, $Q\left(u_n,v_n\right) \leq 0$, one has
\begin{equation*}
  \|u_n-u\|_2^2 =\|v_n-v\|_2^2  =\Theta^2-\Theta_1^2+o_n(1),
\end{equation*}
and
\begin{eqnarray*}
Q\left(u_n-u,v_n-v\right)\leq-Q(u,v)+o_n(1)<0.
\end{eqnarray*}
There exists some $\tau_n \in(0,1)$ such that $Q\left(\left(u_n-u\right)^{\tau_n},\left(v_n-v\right)^{\tau_n}\right)=0$. Consequently,
\begin{eqnarray*}
\widetilde{m_\Theta} &\leq& I\left(\left(u_n-u\right)^{\tau_n},(v_n-v)^{\tau_n}\right)-\frac{1}{3}Q\left(\left(u_n-u\right)^{\tau_n},\left(v_n-v\right)^{\tau_n}\right)\\
&<&I\left(u_n-u,v_n-v\right)=I\left(u_n,v_n\right)-I(u,v)+o_n(1)=\widetilde{m_\Theta}-I(u,v)+o_n(1) .
\end{eqnarray*}
Taking $n \rightarrow \infty$, we obtain $I(u,v)=0$. This implies $(u,v)=0$, which is a contradiction, so $Q(u,v) \leq 0$. If $Q(u,v)<0$, then again by Lemma \ref{L2.1} we find some $s \in(0,1)$ such that $Q\left(u^s,v^s\right)=0$. But then using Lemma \ref{L2.2} and $\Theta_1 \leq \Theta$,
$$
\widetilde{m_{\Theta_1}} \leq I\left(u^s,v^s\right)-\frac{1}{3}Q\left(u^s,v^s\right)<I(u,v)-\frac{1}{3}Q(u,v) \leq \widetilde{m_\Theta} \leq \widetilde{m_{\Theta_1}},
$$
which is a contradiction. If $\|u\|_2\neq \|v\|_2$,  we may assume $\|u\|_2=\Theta_1<\|v\|_2=\Theta_2\leq\Theta$, then $I(u,v) \leq \widetilde{m_{\Theta_1,\Theta_2}}$, where
 \begin{equation*}
   \widetilde{m_{\Theta_1,\Theta_2}}:=\inf\{I(u,v): S_{\Theta_1}\times S_{\Theta_2},Q(u,v)\leq0\}.
 \end{equation*}
By Brezis-Lieb lemma, we have
\begin{equation*}
  \|u_n-u\|_2^2 =\Theta^2-\Theta_1^2+o_n(1),\ \|v_n-v\|_2^2  =\Theta^2-\Theta_2^2+o_n(1).
\end{equation*}
Similar to the previous situation, we also have $Q\left(u^s,v^s\right)=0$, $s\in(0,1)$.
Then
$$
\widetilde{m_{\Theta_1,\Theta_2}} \leq I\left(u^s,v^s\right)-\frac{1}{3}Q\left(u^s,v^s\right)<I(u,v)-\frac{1}{3}Q(u,v)  \leq \widetilde{m_{\Theta_1,\Theta_2}}.
$$
which is absurd. Therefore, $Q(u,v)=0$, then $(u,v)$ is a minimizer of $m_{\Theta_1}$. From Lemma \ref{L2.5} we know that $(u,v)$ is a solution of \eqref{eq1.1}.

Next, we prove that $\lambda_1+\lambda_2$ is non-negative. Note that
\begin{equation*}
   \int_{\mathbb{R}^3 \times\mathbb{T} } | \nabla_{x,y}u|^2 d xdy+\lambda_1\Theta^2  =\int_{\mathbb{R}^3\times\mathbb{T} }(|u|^4+|u|^2|v|^2)d xdy,
\end{equation*}
\begin{equation*}
   \int_{\mathbb{R}^3 \times\mathbb{T} } | \nabla_{x,y}v|^2 d xdy+\lambda_2\Theta^2  =\int_{\mathbb{R}^3\times\mathbb{T} }(|v|^4+|u|^2|v|^2)d xdy,
\end{equation*}
and
\begin{equation*}
  \int_{\mathbb{R}^3\times \mathbb{T} }(|\nabla_xu |^{2}+|\nabla_xv |^{2})dxdy =\frac{3}{4}\int_{\mathbb{R}^3\times\mathbb{T}}(|u|^{4}+2|u|^{2}|v|^{2}+|v|^{4})dxdy
\end{equation*}
then
$$
\left\|u_y\right\|_2^2+\left\|v_y\right\|_2^2+(\lambda_1+\lambda_2)\Theta^2=\frac{1}{4}\int_{\mathbb{R}^3\times\mathbb{T}}(|u|^{4}+2|u|^{2}|v|^{2}+|v|^{4})dxdy.
$$
Define $T_\tau(u,v)=(\tau^{-2} u(\tau^{-2} x,y),\tau ^{-2}v(\tau^{-2} x,y))$. By using Lemma \ref{L2.3} and $(u,v)$  is an optimizer of $m_\Theta$, we know $\frac{d}{d\tau}I(T_\tau(u,v))\left|_{\lambda=1}\right.\leq0$, that is,
\begin{equation*}
   -\|\nabla_xu\|_2^2 -\|\nabla_xv\|_2^2+\left\|u_y\right\|_2^2 +\left\|v_y\right\|_2^2+\frac{1}{2}\int_{\mathbb{R}^3\times\mathbb{T}}(|u|^{4}+2|u|^{2}|v|^{2}+|v|^{4})dxdy\leq0. \end{equation*}
By using $Q(u,v)=0$, then
\begin{equation*}
  \left\|u_y\right\|_2^2+\left\|v_y\right\|_2^2\leq\frac{1}{4}\int_{\mathbb{R}^3\times\mathbb{T}}(|u|^{4}+2|u|^{2}|v|^{2}+|v|^{4})dxdy.
\end{equation*}
Hence, we infer that $(\lambda_1+\lambda_2) \Theta^2 \geq 0$. Since $\Theta> 0$ we conclude $(\lambda_1+\lambda_2) \geq 0$. If $\lambda_1+\lambda_2=0$, then we get $\lambda_1=\lambda_2=0$ or $\lambda_1+\lambda_2>0$ because of $\lambda_1\cdot\lambda_2\geq0$.

\textbf{Case 1:  $\lambda_1=\lambda_2=0$.}  Assume that $(u,v)$ satisfies the equation
\begin{equation*}
 \left\{\aligned
&-\Delta_{x,y}u=|u|^{2}u+u|v|^2,\ (x, y) \in \mathbb{R}^3  \times \mathbb{T} , \\
&-\Delta_{x,y}v=|v|^{2}v+v|u|^2,\ (x, y) \in \mathbb{R}^3  \times \mathbb{T} .
\endaligned
\right.
\end{equation*}
By using the elliptic regularity theory, we infer that $(u,v) \in C^2(\mathbb{R}^{4})$. Using strong maximum principle we also know that $(u,v)$ is positive. By \cite{ACWZ2012}, any positive $C^2$-solution of \eqref{eq1.1} on $\mathbb{R}^4$ must be of the form
$$
(\frac{b}{1+a^2|(x, y)-(x_0, y_0)|^2},\frac{d}{1+c^2|(x, y)-(x_0, y_0)|^2})
$$
with some $a, b, c, d>0$ and $\left(x_0, y_0\right) \in \mathbb{R}^{4}$. However, in this case $u$ can not be periodic along the $y$-direction, which leads to a contradiction.

\textbf{Case 2:  $\lambda_1+\lambda_2>0$.} We prove $\|u\|_2=\|v\|_2=\Theta$. Assume  $\Theta_1<\Theta$, by Lemma \ref{L2.3}, we know that $m_{\Theta_1}$ is a local minimizer of the mapping $\Theta \mapsto m_\Theta$, which implies
\begin{equation*}
  \left\|u_y\right\|_2^2+\left\|v_y\right\|_2^2=\frac{1}{4}\int_{\mathbb{R}^3\times\mathbb{T}}(|u|^{4}+2|u|^{2}|v|^{2}+|v|^{4})dxdy.
\end{equation*}
then $(\lambda_1+\lambda_2)\|u\|_2=0$, which is a contradiction because $\lambda_1+\lambda_2>0$ and $(u,v) \neq 0$. Hence $\|u\|_2=\|v\|_2=\Theta$. By using strong maximum principle, we get that $(u,v)$ is positive.

\section{Dependency of the ground states}
In order to study the $y$-dependence of the ground states, we need some notation on the Euclidean space $\mathbb{R}^3$. Define
\begin{equation*}
  \widehat{m}_{\rho}=\inf_{\substack{\|u\|_2=\|v\|_2=\rho,\\ (u,v)\in H^1\times H^1}}\widehat{I}(u,v),
\end{equation*}
where
\begin{equation}\label{eq3.1}
  \widehat{I}(u,v)=\frac{1}{2}\int_{\mathbb{R}^3}|\nabla_x u|^2dx +\frac{1}{2}\int_{\mathbb{R}^3}|\nabla_x v|^2dx-\frac{1}{4}\int_{\mathbb{R}^3}(|u|^4+2|u|^2|v|^2+|v|^4)dx .
\end{equation}
Let $(\widehat{u},\widehat{v})=(\rho^{-2}u(\rho^{-2}x),\rho^{-2}v(\rho^{-2}x))$, we have
\begin{equation*}
  \widehat{m}_{\rho}=\rho^{-2}\widehat{m}_1.
\end{equation*}
It is well known that $+\infty>\widehat{m}_\rho>0$ for all $\rho>0$. On the one hand, assume that $(u, v)$ is the critical point to \eqref{eq3.1}, we have the following Pohozaev type identity
\begin{equation*}
    \int_{\mathbb{R}^3}|\nabla_xu|^2dx+\int_{\mathbb{R}^3}|\nabla_xv|^2dx=\frac{3}{4}\int_{\mathbb{R}^3}|u|^4dx+\frac{3}{4}\int_{\mathbb{R}^3}|v|^4dx+\frac{3}{2}\int_{\mathbb{R}^3}|u|^{2}|v|^{2}dx.
\end{equation*}
On the other hand,
\begin{equation*}
    \int_{\mathbb{R}^3}|\nabla_xu|^2dx+\omega_1\int_{\mathbb{R}^3}|u|^2dx= \int_{\mathbb{R}^3}|u|^4dx+\int_{\mathbb{R}^3}|u|^{2}|v|^{2}dx
\end{equation*}
and
\begin{equation*}
   \int_{\mathbb{R}^3}|\nabla_xv|^2dx+\omega_2\int_{\mathbb{R}^3}|v|^2dx= \int_{\mathbb{R}^3}|v|^4dx+\int_{\mathbb{R}^3}|u|^{2}|v|^{2}dx.
\end{equation*}
Therefore,
\begin{eqnarray*}
&&\omega_1\int_{\mathbb{R}^3}|u|^2dx+\omega_2\int_{\mathbb{R}^3}|v|^2dx\\
&=&\frac{1}{3} \int_{\mathbb{R}^3}|\nabla_xu|^2dx+\frac{1}{3} \int_{\mathbb{R}^3}|\nabla_xv|^2dx\\
&=&2\left(\frac{1}{2}\int_{\mathbb{R}^3}|\nabla_x u|^2dx-\frac{1}{4}\int_{\mathbb{R}^3}|u|^4dx+\frac{1}{2}\int_{\mathbb{R}^3}|\nabla_x v|^2dx-\frac{1}{4}\int_{\mathbb{R}^3}|v|^4dx-\frac{1}{2}\int_{\mathbb{R}^3}|u|^{2}|v|^{2}dx\right)\\
&=&2\widehat{m}_{\|u\|_2}
\end{eqnarray*}
and
\begin{equation*}
  \widehat{m}_{\|u\|_2}= \widehat{I}(u)=\frac{1}{6}\left( \int_{\mathbb{R}^3}|\nabla_xu|^2dx+\int_{\mathbb{R}^3}|\nabla_xv|^2dx\right).
\end{equation*}
Next, we consider an auxiliary problem, that is, the minimization problems
\begin{equation*}
   m_{1,\mu}=\inf_{\substack{\|u\|_{L_{x,y}^2}=\|v\|_{L_{x,y}^2}=1,\\ (u,v)\in H^1\times H^1}}J_\mu(u,v),
\end{equation*}
where
\begin{equation*}
  J_\mu(u,v)=\frac{1}{2}\int_{\mathbb{R}^3\times \mathbb{T} }\left(|  \nabla_xu|^{2}+\mu|u_{y}|^{2}+|  \nabla_xv|^{2}+\mu|v_{y}|^{2}\right)dxdy-\frac{1}{4}\int_{\mathbb{R}^3\times\mathbb{T}}(|u|^{4}+2|u|^2|v|^2+|v|^{4})dxdy .
\end{equation*}
\subsection{Parameter tends to zero}

Now, we consider the case $\mu\rightarrow0$.
\begin{lemma}\label{L3.1}
$\lim\limits_{\mu\rightarrow0}m_{1,\mu}< 2\pi \widehat{m}_{\frac{1}{ \sqrt{2\pi} }}$.
\end{lemma}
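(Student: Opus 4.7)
The plan is to construct a one-parameter family of mass-preserving test couples on $\mathbb{R}^{3}\times\mathbb{T}$ whose $y$-dependence is carried by a single orthogonal Fourier mode, and to expand $J_{\mu}$ to second order in the parameter to reveal a negative direction that survives the limit $\mu\to 0$. Let $(U,V)$ be a minimizer of $\widehat{m}_{1/\sqrt{2\pi}}$ on $\mathbb{R}^{3}$, so that $\|U\|_{L^{2}(\mathbb{R}^{3})}=\|V\|_{L^{2}(\mathbb{R}^{3})}=1/\sqrt{2\pi}$ and, by the Pohozaev identity derived above,
\[
\|\nabla_{x}U\|_{2}^{2}+\|\nabla_{x}V\|_{2}^{2}=\tfrac{3}{4}\mathcal{N},\qquad \mathcal{N}:=\int_{\mathbb{R}^{3}}(U^{2}+V^{2})^{2}\,dx.
\]
Viewed as constant-in-$y$ functions, $(u_{0},v_{0})(x,y):=(U(x),V(x))$ lie in the admissible set (one checks $\|u_{0}\|_{L^{2}_{x,y}}=\|v_{0}\|_{L^{2}_{x,y}}=1$, and if a Pohozaev constraint $Q=0$ is implicit then it is verified by the identity above) and satisfy $J_{\mu}(u_{0},v_{0})=2\pi\,\widehat{m}_{1/\sqrt{2\pi}}$ for every $\mu$, giving the upper bound $m_{1,\mu}\le 2\pi\,\widehat{m}_{1/\sqrt{2\pi}}$ for free.

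To strictly lower this baseline, I would deform in $y$ orthogonally to the constant mode. Fix $k\in\mathbb{Z}\setminus\{0\}$ (take $k=1$) and set
\[
 h_{\theta}(y):=\cos\theta+\sqrt{2}\,\sin\theta\cos(ky),\qquad (u_{\theta},v_{\theta})(x,y):=(U(x)h_{\theta}(y),\,V(x)h_{\theta}(y)).
\]
The tautologies $\int_{\mathbb{T}}h_{\theta}^{2}\,dy=2\pi$ for every $\theta$, $\int_{\mathbb{T}}(h'_{\theta})^{2}\,dy=2\pi k^{2}\sin^{2}\theta$, and $\int_{\mathbb{T}}h_{\theta}^{4}\,dy=2\pi+8\pi\theta^{2}+O(\theta^{4})$ preserve the mass constraint exactly and allow me to factor $x$- and $y$-integrations. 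Substituting them into
\[
 J_{\mu}(u_{\theta},v_{\theta})=\pi\bigl(\|\nabla_{x}U\|_{2}^{2}+\|\nabla_{x}V\|_{2}^{2}\bigr)+\mu k^{2}\sin^{2}\theta-\tfrac{\mathcal{N}}{4}\int_{\mathbb{T}}h_{\theta}^{4}\,dy
\]
and using the Pohozaev relation collapses the $\theta^{0}$ contribution to the expected baseline and leaves
\[
 J_{\mu}(u_{\theta},v_{\theta})=2\pi\,\widehat{m}_{1/\sqrt{2\pi}}+\theta^{2}\bigl(\mu k^{2}-2\pi\mathcal{N}\bigr)+O(\theta^{4}).
\]
The vanishing of the first-order term is due to $\int_{\mathbb{T}}\cos(ky)\,dy=0$, which is precisely the payoff of choosing an orthogonal Fourier mode.

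The conclusion follows: for any $\mu<2\pi\mathcal{N}/k^{2}$ the coefficient of $\theta^{2}$ is strictly negative, so fixing a small $\theta\ne 0$ gives $m_{1,\mu}\le J_{\mu}(u_{\theta},v_{\theta})<2\pi\,\widehat{m}_{1/\sqrt{2\pi}}$ for all sufficiently small $\mu$, and passing to $\mu\to 0$ produces
\[
 \limsup_{\mu\to 0}m_{1,\mu}\le 2\pi\,\widehat{m}_{1/\sqrt{2\pi}}-2\pi\mathcal{N}\theta^{2}+O(\theta^{4})<2\pi\,\widehat{m}_{1/\sqrt{2\pi}}.
\]
The step I expect to require the most care is the algebraic reduction of the $\theta^{2}$ coefficient: the clean cancellation between the Dirichlet energy and the $L^{4}$ term at $\theta=0$ is dictated by the three-dimensional Pohozaev identity, and without that identity the competition between $\mu k^{2}$ and $-2\pi\mathcal{N}$ would be ambiguous. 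A secondary point is compatibility with any Pohozaev-type constraint implicit in the definition of $m_{1,\mu}$ (as in Lemma~\ref{L2.2}): one then replaces $(u_{\theta},v_{\theta})$ by its image under the scaling $u\mapsto\tau^{3/2}u(\tau x,y)$ at the unique $\tau^{\ast}(\theta)=1+O(\theta^{2})$, a reprojection that modifies $J_{\mu}$ only by $O(\theta^{4})$ and hence leaves the leading negative contribution intact.
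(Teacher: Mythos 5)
Your proof is correct, but it takes a genuinely different route from the paper's. You perturb the $y$-independent competitor $(U,V)$ along a nonzero Fourier mode, $h_\theta=\cos\theta+\sqrt{2}\sin\theta\cos(ky)$, which preserves the mass exactly, kills the first variation by orthogonality of $\cos(ky)$ to the constants, and produces the second-order coefficient $\mu k^2-2\pi\mathcal{N}$; the Pohozaev identity is what pins the quartic gain at $-2\pi\mathcal{N}\theta^2$ against the kinetic cost $\mu k^2\theta^2$. Your arithmetic checks out ($\int_{\mathbb{T}}h_\theta^4\,dy=2\pi\cos^4\theta+12\pi\cos^2\theta\sin^2\theta+3\pi\sin^4\theta=2\pi+8\pi\theta^2+O(\theta^4)$), and your reprojection onto the semivirial manifold via $\tau^*(\theta)=1-4\theta^2+O(\theta^4)$ does cost only $O(\theta^4)$, since $\tau=1$ is a critical point of $\tau\mapsto\frac{\tau^2}{2}A-\frac{\tau^3}{3}B$ up to $O(\theta^2)$ errors; you are in fact more careful on this admissibility point than the paper, which never verifies that its test function satisfies $Q=0$. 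The paper instead argues by concentration in $y$: it builds a tent profile $\varphi$ supported on an interval of length $2(\pi-a)$, normalized so that $\|\varphi\|_{L_y^2}^2=\|\varphi\|_{L_y^4}^4$, multiplies it by an optimizer of $\widehat{m}_{1/\|\varphi\|_{L_y^2}}$, and exploits the mass-supercritical scaling $\widehat{m}_\rho=\rho^{-2}\widehat{m}_1$, so that $\|\varphi\|_{L_y^2}^2\,\widehat{m}_{1/\|\varphi\|_{L_y^2}}=\|\varphi\|_{L_y^2}^4\,\widehat{m}_1<(2\pi)^2\widehat{m}_1$ once $\|\varphi\|_{L_y^2}^2<2\pi$, finishing with a mollification estimate at $\mu=0$. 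Your perturbative version buys an explicit range $\mu<2\pi\mathcal{N}/k^2$ on which the strict inequality holds and avoids the mollifier bookkeeping; the paper's concentration version can push the $\mu=0$ energy arbitrarily far below the threshold (not merely by $O(\theta^2)$) but only addresses $\mu>0$ through the limit. Both arguments rely on the same external inputs, namely existence and regularity of a minimizer of $\widehat{m}_\rho$ together with its Pohozaev identity.
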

\begin{proof}
Define the function $\varphi:[0,2\pi]\rightarrow[0,+\infty)$ by
 \begin{equation*}
   \varphi(y)=\left\{\aligned
    &0, \  y\in[0,a]\cup[2\pi-a,2\pi], \\
    &b(y-a), \  y\in[a,\pi],  \\
    &\varphi(2\pi-y),\   y\in[\pi,2\pi-a],
\endaligned
\right.
\end{equation*}
where $a \in (0, \pi )$ and $b \in (0,\infty )$. Now, we determine the value of $b$. By using direct calculations, it follows that
$$
\|\varphi\|_{L_{y}^2}^2=\frac{2 b^2(\pi-a)^3}{3},\ \|\varphi\|_{L_{y}^4}^4=\frac{2b^4(\pi-a)^{5}}{5}.
$$
Let $b=\left[\frac{5(\pi-a)^{-2}}{3}\right]^{\frac{1}{2}}$, we get $\|\varphi\|_{L_{y}^2}^2=\|\varphi\|_{L_{y}^4}^4$.
Hence,
$$
\|\varphi\|_{L_{y}^2}^2=\frac{2  (\pi-a)^3}{3} \cdot \left[\frac{5(\pi-a)^{-2}}{3}\right]=\frac{10(\pi-a)}{9}  .
$$
Let $\varphi_\varepsilon$ be the $\varepsilon$-mollifier of $\varphi$ on $[0,2 \pi]$ with some to be determined small $\varepsilon>0$. In particular, since $\varphi$ has compact support in $(0,2 \pi)$, so is $\varphi_{\varepsilon}$ for $\varepsilon \ll 1$. Next, let $(Q,Q)$ be an optimizer of $\widehat{m}_{\frac{1}{\|\varphi\|_{L_y^2} }}$ and define
$$
\psi(x, y):=Q(x) \cdot\frac{\|\varphi\|_{L_y^2}}{\left\|\varphi_\varepsilon\right\|_{L_y^2}}  \varphi_\varepsilon\left(y\right).
$$
This is to be understood that we extend $\varphi_\varepsilon$ $2 \pi$-periodically along the $y$-direction, which is possible since $\varphi_\varepsilon$ has compact support in $(0,2 \pi)$ when $\varepsilon \ll 1$. We have then $\|\psi\|_{L_{x,y}^2}=1$. Moreover,
\begin{eqnarray*}
J_0(\psi,\psi)&=&\int_{\mathbb{R}^3\times \mathbb{T} } |\nabla_x\psi|^{2} dxdy-\int_{\mathbb{R}^3\times\mathbb{T}}|\psi|^{4}dxdy\\
&= &\|\varphi\|_{L_y^2}^{2 } \| \nabla_xQ \|_{L_x^2}^2  - \|\varphi\|_{L_y^2}^{4}\|\varphi_\varepsilon\|_{L_y^2}^{-4} \| Q  \|_{L_x^4}^4 \|\varphi_\varepsilon\|_{L_y^4}^{4}\\
&= &\|\varphi\|_{L_y^2}^{2 }\left(\|\nabla_xQ \|_{L_x^2}^2-\| Q  \|_{L_x^4}^4\right)-\|\varphi\|_{L_y^2}^{2 }\left(\|\varphi\|_{L_y^2}^{2}\|\varphi_\varepsilon\|_{L_y^2}^{-4}  \|\varphi_\varepsilon\|_{L_y^4}^{4}-1\right)\| Q  \|_{L_x^4}^4  \\
&= & \|\varphi\|_{L_y^2}^{2 }\widehat{m}_{\frac{1}{\|\varphi\|_{L_y^2}}}+I.
\end{eqnarray*}
By using Lemma \ref{L2.4}, we can obtain that the mapping $\Theta \mapsto \Theta^{-2} \widehat{m}_\Theta$ is strictly decreasing on $(0, \infty)$. Note that $\|\varphi\|_{L_y^2} \rightarrow 0$ as $a \rightarrow \pi$, we can choose $a$ sufficiently close to $\pi$ such that
$$
\|\varphi\|_{L_y^2}^{2 }\widehat{m}_{\frac{1}{\|\varphi\|_{L_y^2} }}< 2 \pi   \widehat{m}_{\frac{1}{\sqrt{2\pi }}}.
$$
Therefore, there exists some $\zeta>0$ such that $\|\varphi\|_{L_y^2}^{2 }\widehat{m}_{\frac{1}{\|\varphi\|_{L_y^2} }}+\zeta<2 \pi   \widehat{m}_{\frac{1}{\sqrt{2\pi}}}$. By the properties of a mollifier operator, we also know that
$$
\left\|\varphi_\varepsilon\right\|_{L_y^2}^2=\|\varphi\|_{L_y^2}^2+o_\varepsilon(1)=\|\varphi\|_{L_y^4}^4+o_\varepsilon(1)=\|\varphi_\varepsilon\|_{L_y^4}^4+o_\varepsilon(1).
$$
Taking $\varepsilon$ sufficiently small, it follows  that $|I| \leq \zeta$, then we get $J_0(\psi,\psi)<2 \pi \widehat{m}_{\frac{1}{ \sqrt{2 \pi} }}$. Consequently,
$$
\lim\limits_{\mu \rightarrow 0} m_{1, \mu} \leq \lim \limits_{\mu \rightarrow 0} J_\mu(\psi,\psi)=J_0(\psi,\psi)<2 \pi \widehat{m}_{\frac{1}{ \sqrt{2 \pi }}},
$$
which completes the proof.
\end{proof}
\subsection{Parameter tends to infinity}
Now, we consider the case $\mu\rightarrow+\infty$.
\begin{lemma}\label{L3.2}
Let $u_\mu$ be an optimizer of $m_{1, \mu}$. Then we have
\begin{equation}\label{eq3.2}
  \lim\limits_{\mu \rightarrow \infty} m_{1, \mu}= 2 \pi  \widehat{m}_{\frac{1}{ \sqrt{2\pi} }}
\end{equation}
and
\begin{equation}\label{eq3.3}
  \lim\limits_{\mu \rightarrow \infty} \mu \int_{\mathbb{R}^3\times \mathbb{T} }(|u_y|^{2}+|v_y|^{2} )dxdy=0.
\end{equation}
\end{lemma}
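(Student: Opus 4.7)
The plan is to prove both claims \eqref{eq3.2} and \eqref{eq3.3} together. The bridge between them is the structural identity obtained by applying the $L^2$-preserving $x$-scaling $(u,v)\mapsto(\tau^{3/2}u(\tau x,y),\tau^{3/2}v(\tau x,y))$ of Lemma \ref{L2.2} to the minimizer $(u_\mu,v_\mu)$ of $m_{1,\mu}$: the same differentiation argument forces the Pohozaev-type identity $Q(u_\mu,v_\mu)=0$, and subtracting $\tfrac{1}{3}Q(u_\mu,v_\mu)$ from $J_\mu$ cancels the $L^4$ terms, yielding
\begin{equation*}
m_{1,\mu}=\tfrac{1}{6}\bigl(\|\nabla_x u_\mu\|_2^2+\|\nabla_x v_\mu\|_2^2\bigr)+\tfrac{\mu}{2}\bigl(\|\partial_y u_\mu\|_2^2+\|\partial_y v_\mu\|_2^2\bigr).
\end{equation*}

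For the upper bound I would use the $y$-independent test pair $\psi_i(x,y):=Q_i(x)$, where $(Q_1,Q_2)$ is an optimizer of $\widehat m_{1/\sqrt{2\pi}}$; since $\|\psi_i\|_{L^2_{x,y}}^2=2\pi\|Q_i\|_{L^2(\mathbb{R}^3)}^2=1$ and $\partial_y\psi_i\equiv 0$, a direct computation gives $J_\mu(\psi_1,\psi_2)=2\pi\widehat I(Q_1,Q_2)=2\pi\widehat m_{1/\sqrt{2\pi}}$ for every $\mu$, hence $m_{1,\mu}\le 2\pi\widehat m_{1/\sqrt{2\pi}}$. Substituted into the structural identity this already yields the a priori bounds $\mu(\|\partial_y u_\mu\|_2^2+\|\partial_y v_\mu\|_2^2)\le C$ and $\|\nabla_x u_\mu\|_2^2+\|\nabla_x v_\mu\|_2^2\le C$.

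For the matching lower bound I would decompose $u_\mu=U_\mu+w_\mu$ with $U_\mu(x):=\tfrac{1}{2\pi}\int_\mathbb{T} u_\mu(x,y)\,dy$ and $\int_\mathbb{T} w_\mu\,dy=0$ (similarly $v_\mu=V_\mu+z_\mu$). Poincaré on $\mathbb{T}$ gives $\|w_\mu\|_{L^2_{x,y}}^2\le C\|\partial_y u_\mu\|_2^2=O(\mu^{-1})\to 0$, and combining with Proposition \ref{p2.1} and the boundedness of $\|\nabla_x u_\mu\|_2$ upgrades this to $\|w_\mu\|_{L^4_{x,y}},\|z_\mu\|_{L^4_{x,y}}\to 0$. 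A direct expansion, using $y$-orthogonality of $U_\mu$ and $w_\mu$ in the quadratic terms and absorbing the $L^4$ cross terms via the vanishing $L^4$-norms of $w_\mu,z_\mu$, produces
\begin{equation*}
J_\mu(u_\mu,v_\mu)=2\pi\widehat I(U_\mu,V_\mu)+\tfrac{1}{2}(\|\nabla_x w_\mu\|_2^2+\|\nabla_x z_\mu\|_2^2)+\tfrac{\mu}{2}(\|\partial_y u_\mu\|_2^2+\|\partial_y v_\mu\|_2^2)+o(1).
\end{equation*}

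The remaining task is to show $\widehat I(U_\mu,V_\mu)\ge\widehat m_{1/\sqrt{2\pi}}+o(1)$. One has $\|U_\mu\|_{L^2(\mathbb{R}^3)},\|V_\mu\|_{L^2(\mathbb{R}^3)}\to 1/\sqrt{2\pi}$ from the $L^2$-Pythagoras identity, and applying the same expansion to the relation $Q(u_\mu,v_\mu)=0$ gives $2\pi\widehat Q(U_\mu,V_\mu)=-(\|\nabla_x w_\mu\|_2^2+\|\nabla_x z_\mu\|_2^2)+o(1)\le o(1)$, so $(U_\mu,V_\mu)$ lies asymptotically in the region $\widehat Q\le 0$. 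The Euclidean analog of the identity $m_\Theta=\widetilde{m_\Theta}$ proved just before Theorem \ref{t1.1}, combined with continuity of $\rho\mapsto\widehat m_\rho$, then delivers the desired lower bound. Plugging back into the structural identity forces $m_{1,\mu}\ge 2\pi\widehat m_{1/\sqrt{2\pi}}+\tfrac{\mu}{2}(\|\partial_y u_\mu\|_2^2+\|\partial_y v_\mu\|_2^2)+o(1)$, which combined with the upper bound proves \eqref{eq3.2} and \eqref{eq3.3} simultaneously. The main obstacle is this last step: $(U_\mu,V_\mu)$ neither satisfies the Euclidean Pohozaev identity exactly nor necessarily has equal $L^2$-components, so one must carefully combine the $\widehat Q\le 0$ characterization of $\widehat m_\rho$ with continuity in $\rho$ (and a minor symmetric adjustment when $\|U_\mu\|_2\neq\|V_\mu\|_2$) to close the estimate.
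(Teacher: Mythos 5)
Your proposal is correct and reaches both \eqref{eq3.2} and \eqref{eq3.3}, but the lower bound goes by a genuinely different route from the paper's. You share the upper bound (a $y$-independent test pair built from a Euclidean optimizer) and the structural identity $m_{1,\mu}=\tfrac16(\|\nabla_x u_\mu\|_2^2+\|\nabla_x v_\mu\|_2^2)+\tfrac{\mu}{2}(\|\partial_y u_\mu\|_2^2+\|\partial_y v_\mu\|_2^2)$ coming from $Q(u_\mu,v_\mu)=0$. For the lower bound the paper freezes $y$: it bounds each slice energy from below by $\widehat{m}_{\|u_{\mu_j}(\cdot,y)\|_{L_x^2}}$, shows the mass density $w_j(y)=\|u_{\mu_j}(\cdot,y)\|_{L_x^2}^2$ converges to the constant $\tfrac{1}{2\pi}$ by estimating $\|\partial_y w_j\|_{L_y^1}$ via Cauchy--Schwarz together with \eqref{eq3.4} and invoking $W^{1,1}(\mathbb{T})\hookrightarrow L^\infty(\mathbb{T})$ plus Rellich, and then integrates in $y$. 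You instead split off the $y$-mean, kill the oscillatory part via Poincar\'e--Wirtinger and the $O(\mu^{-1})$ bound on $\|\partial_y u_\mu\|_2^2$, and compare $(U_\mu,V_\mu)$ with $\widehat{m}$ through the asymptotic constraint $\widehat{Q}(U_\mu,V_\mu)\le o(1)$. Your route buys a real advantage: the paper's slice-wise inequality tacitly treats $\widehat{m}_\rho$ as an unconstrained infimum over the sphere (which is $-\infty$ for this mass-supercritical nonlinearity, so $\widehat{m}_\rho$ must carry a Pohozaev constraint), and a generic slice need not satisfy that constraint; you verify admissibility of $(U_\mu,V_\mu)$ before invoking the variational characterization. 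The step you flag as the obstacle is indeed the crux but closes with tools already in the paper: if $0<\widehat{Q}(U_\mu,V_\mu)=\varepsilon_\mu\to0$, the Euclidean analogue of Lemma \ref{L2.2} yields $\tau_\mu^*=1+O(\varepsilon_\mu)$ with $\widehat{Q}$ vanishing there and $\widehat{I}(U_\mu^{\tau_\mu^*},V_\mu^{\tau_\mu^*})=\widehat{I}(U_\mu,V_\mu)+O(\varepsilon_\mu)$ (using that $\|\nabla_x U_\mu\|_2^2+\|\nabla_x V_\mu\|_2^2$ is bounded below, which follows from Gagliardo--Nirenberg as in Lemma \ref{L2.1}), and the continuity and monotonicity of $\rho\mapsto\widehat{m}_\rho$ from the analogue of Lemma \ref{L2.3} absorb both $\|U_\mu\|_2,\|V_\mu\|_2\to\tfrac{1}{\sqrt{2\pi}}$ and any asymmetry between the two masses. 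Writing that paragraph out explicitly would make your argument complete.
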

\begin{proof}
By assuming a candidate $u \in S_1$ is independent of $y$ we can obtain that $m_{1, \mu} \leq2 \pi  \widehat{m}_{\frac{1}{ \sqrt{2\pi }}}$. Indeed, let $w(x)\in H^1(\mathbb{R}_x^3 )$ be such that $\|w\|_{L_x^2}^2=\frac{1}{ 2\pi }$ and $J_\mu(w)=\widehat{m}_{\frac{1}{\sqrt{2\pi}}}$. It follows that
\begin{eqnarray*}
  m_{1, \mu}  \leq J_\mu(w(x),w(x))
    = 2\pi  \widehat{m}_{\frac{1}{\sqrt{2\pi}}}.
\end{eqnarray*}
Now we prove that
\begin{equation}\label{eq3.4}
  \lim_{j\rightarrow\infty}\int_{\mathbb{R}^3\times\mathbb{T}}(|u_{y,\mu_j}|^{2}+|v_{y,\mu_j}|^{2})dxdy=0.
\end{equation}
In fact, suppose \eqref{eq3.4} does not hold, then up to a subsequence of $\mu_j$ we may assume that
$$
\inf\limits_{\mu>0}\int_{\mathbb{R}^3\times\mathbb{T}}(|u_{y,\mu_j}|^{2}+|v_{y,\mu_j}|^{2})dxdy=\zeta>0,\ \lim\limits_{j\rightarrow\infty}\mu_j=\infty,
$$
then
$$
\lim _{j\rightarrow \infty}(\mu_j-1)\int_{\mathbb{R}^3\times\mathbb{T}}(|u_{y,\mu_j}|^{2}+|v_{y,\mu_j}|^{2})dxdy=\infty.
$$
Note that $Q(u_{\mu_j},v_{\mu_j})=0$, the
\begin{eqnarray*}
  m_{1,\mu_j} &=&I(u_{\mu_j},v_{\mu_j})-\frac{1}{3}Q(u_{\mu_j},v_{\mu_j})   \\
    &=& \frac{2}{3}\int_{\mathbb{R}^3\times \mathbb{T} }\left(|  \nabla_xu_{\mu_j}|^{2} +|  \nabla_xv_{\mu_j}|^{2} \right)dxdy+\mu_j\int_{\mathbb{R}^3\times \mathbb{T} }\left( |u_{y,\mu_j}|^{2} + |v_{y.\mu_j}|^{2}\right)dxdy .  \\
    &\geq&\mu_j\int_{\mathbb{R}^3\times \mathbb{T} }\left( |u_{y,\mu_j}|^{2} + |v_{y,\mu_j}|^{2}\right)dxdy\rightarrow+\infty
\end{eqnarray*}
as $j \rightarrow \infty$. Now taking $j \rightarrow \infty$ we infer the contradiction $ 2 \pi  \widehat{m}_{\frac{1}{\sqrt{2 \pi}}} \geq m_{1, \mu} \rightarrow \infty$.

Now, we prove that $\liminf\limits_{j\rightarrow\infty} m_{1, \mu_j}\geq2 \pi  \widehat{m}_{\frac{1}{\sqrt{2 \pi}}}$. In fact, we introduce the functions $w_j(y)=\|u_{\mu_j}(x,y)\|_{L_x^2}^2=\|v_{\mu_j}(x,y)\|_{L_x^2}^2$, then $\|w_j(y)\|_{L_y^1}=1$. By using the product rule, it follows that
 \begin{eqnarray*}
  \int_{\mathbb{T}}| w_{y,j}(y)|dy
  &\leq&2\int_{\mathbb{R}^3\times \mathbb{T}} |u_{\mu_j}(x,y)u_{y,\mu_j}(x,y)| dxdy\\
  &\leq&C\|u_{\mu_j}\|_{L_{x,y}^2}\|u_{y,\mu_j}(x,y)\|_{L_{x,y}^2}.
 \end{eqnarray*}
Using \eqref{eq3.4}, we obtain
\begin{equation*}
  \lim\limits_{j\rightarrow\infty}\int_{\mathbb{T}}|w_{y,j}(y)|dy=0.
\end{equation*}
Combining this, $W^{1,1}\hookrightarrow L^{\infty}$ and  $\|w_j(y)\|_{L_y^1}=1$ with the Rellich compactness theorem, we know
\begin{equation}\label{eq3.5}
  \lim\limits_{j\rightarrow\infty}\left\|w_j(y)-\frac{1}{2\pi}\right\|_{L_y^r}=0,\ 1\leq r<\infty.
\end{equation}
Now, by the definition of $\widehat{m}_\rho$,
\begin{eqnarray*}
  &&\frac{1}{2}\int_{\mathbb{R}^3}(|\nabla_xu_{\mu_j}|^2+|\nabla_xv_{\mu_j}|^2)dx-\frac{1}{4}\int_{\mathbb{R}^3}(|u_{ \mu_j}|^4+2|u_{ \mu_j}|^2|v_{ \mu_j}|^2+|v_{ \mu_j}|^4)dx \\
  &\geq&\widehat{m}_{\|u_{\mu_j}(\cdot,y)\|_{L_x^2}}
    =\|u_{\mu_j}(\cdot,y)\|_{L_x^2}^{2}\widehat{m}_1
  = w_j \widehat{m}_1
\end{eqnarray*}
for all $y\in\mathbb{T}$. Therefore, by using \eqref{eq3.5}, we deduce that
\begin{eqnarray*}
 m_{1, \mu_j}&=& J_{\mu_j}(u_{\mu_j},v_{\mu_j})\\
 &=&\frac{1}{2}\int_{\mathbb{R}^3\times \mathbb{T} }\left(|  \nabla_xu_{\mu_j}|^{2}+\mu_j|u_{y,\mu_j}|^2+|  \nabla_xv_{\mu_j}|^{2}+\mu_j|v_{y,\mu_j}|^2\right)dxdy   \\
 &&-\frac{1}{4}\int_{\mathbb{R}^3\times\mathbb{T}}(|u_{\mu_j}|^{4}+2|u_{ \mu_j}|^2|v_{ \mu_j}|^2+|v_{\mu_j}|^{4})dxdy\\
    &\geq&\int_{ \mathbb{T} } \left(\frac{1}{2}\int_{\mathbb{R}^3}(|\nabla_xu_{\mu_j}|^2+|\nabla_xv_{\mu_j}|^2)dx-\frac{1}{4}\int_{\mathbb{R}^3}(|u_{ \mu_j}|^4+2|u_{ \mu_j}|^2|v_{ \mu_j}|^2+|v_{ \mu_j}|^4)dx\right)dy  \\
    &\geq& \widehat{m}_1   \int_{ \mathbb{T} }w_jdy=\widehat{m}_1\cdot2\pi\cdot(2\pi)^{-1}=2\pi\widehat{m}_{\frac{1}{\sqrt{2\pi}}}.
\end{eqnarray*}
Finally, we prove \eqref{eq3.3}. In fact, by using previous analysis, we get
\begin{equation*}
  2\pi\widehat{m}_{\frac{1}{\sqrt{2\pi}}}\geq m_{1, \mu_j}\geq\frac{1}{2}\int_{\mathbb{R}^3\times \mathbb{T} }\mu_j(|u_{y,\mu_j}|^2+|v_{y,\mu_j}|^2) dxdy+2\pi\widehat{m}_{\frac{1}{\sqrt{2\pi}}},
\end{equation*}
which complete the proof.
\end{proof}
\subsection{Dependency analysis}

\begin{lemma}\label{L3.3}
We have
\begin{eqnarray}\label{eq3.6}
   \int_{\mathbb{R}^3\times \mathbb{T} }(|\nabla_xu_{\mu_j}|^{2}+|\nabla_xv_{\mu_j}|^{2})dxdy =\frac{3}{4}\int_{\mathbb{R}^3\times\mathbb{T}}(|u_{\mu_j}|^{4}+2|u_{\mu_j}|^{2}|v_{\mu_j}|^{2}+|v_{\mu_j}|^{4})dxdy
\end{eqnarray}
for $\mu_j>0$. Then there exists $\lambda_{1,\mu}, \lambda_{2,\mu}\in\mathbb{R}$ such that
\begin{equation}\label{eq3.7}
   - \Delta_xu_{\mu}-\mu  u_{yy,\mu}+\lambda_{1,\mu}u_\mu=|u_\mu|^{2}u_\mu+u_\mu |v_\mu|^2 ,
\end{equation}
\begin{equation}\label{eq3.8}
   - \Delta_xv_{\mu}-\mu  v_{yy,\mu}+\lambda_{2,\mu}v_\mu=|v_\mu|^{2}v_\mu+v_\mu |u_\mu|^2.
\end{equation}
Moreover,
\begin{equation}\label{eq3.9}
  \lim\limits_{j\rightarrow\infty}\lambda_{1,\mu_j}=\overline{\lambda_1},\  \lim\limits_{j\rightarrow\infty}\lambda_{2,\mu_j}=\overline{\lambda_2}.
\end{equation}
\end{lemma}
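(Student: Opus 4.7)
The plan is to prove the three assertions in sequence: first the Pohozaev-type identity \eqref{eq3.6} via an $x$-only scaling, then the Euler--Lagrange equations via the standard Lagrange multiplier theorem, and finally the convergence of the multipliers via uniform a priori bounds inherited from Lemma \ref{L3.2}.

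For \eqref{eq3.6}, I apply the $x$-scaling $(u^\tau(x,y), v^\tau(x,y)) = (\tau^{3/2}u(\tau x, y), \tau^{3/2}v(\tau x, y))$ used throughout Lemma \ref{L2.2}. This scaling leaves the $L^2$ mass and the $y$-derivative norms $\|u_y\|_2$, $\|v_y\|_2$ unchanged, while rescaling the $x$-gradient and quartic terms by $\tau^2$ and $\tau^3$ respectively, so
\begin{equation*}
J_{\mu_j}(u_{\mu_j}^\tau, v_{\mu_j}^\tau) = \frac{\tau^2}{2}\int_{\mathbb{R}^3\times\mathbb{T}} \bigl(|\nabla_x u_{\mu_j}|^2 + |\nabla_x v_{\mu_j}|^2\bigr)\,dxdy + \frac{\mu_j}{2}\int_{\mathbb{R}^3\times\mathbb{T}} \bigl(|u_{y,\mu_j}|^2 + |v_{y,\mu_j}|^2\bigr)\,dxdy - \frac{\tau^3}{4}\int_{\mathbb{R}^3\times\mathbb{T}} \bigl(|u_{\mu_j}|^4 + 2|u_{\mu_j}|^2|v_{\mu_j}|^2 + |v_{\mu_j}|^4\bigr)\,dxdy.
\end{equation*}
Since $(u_{\mu_j}^\tau, v_{\mu_j}^\tau)$ remains in $S_1 \times S_1$, the minimality of $(u_{\mu_j}, v_{\mu_j})$ forces $\frac{d}{d\tau}J_{\mu_j}(u_{\mu_j}^\tau, v_{\mu_j}^\tau)\bigr|_{\tau=1} = 0$, which is precisely \eqref{eq3.6}.

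For \eqref{eq3.7}--\eqref{eq3.8}, I invoke the Lagrange multiplier theorem on the smooth codimension-two constraint manifold $S_1 \times S_1 \subset H^1(\mathbb{R}^3\times\mathbb{T})^2$. Because $\|u_\mu\|_2 = \|v_\mu\|_2 = 1 > 0$, the differentials of the two constraint functions $(u,v)\mapsto \|u\|_2^2$ and $(u,v)\mapsto \|v\|_2^2$ are linearly independent at $(u_\mu, v_\mu)$, and the constrained minimality of $(u_\mu, v_\mu)$ for the $C^1$ functional $J_\mu$ then produces real multipliers $\lambda_{1,\mu}, \lambda_{2,\mu}$ for which \eqref{eq3.7}--\eqref{eq3.8} holds. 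For \eqref{eq3.9}, I test \eqref{eq3.7} against $u_\mu$ and \eqref{eq3.8} against $v_\mu$, integrate by parts, and use $\|u_\mu\|_2=\|v_\mu\|_2=1$ to obtain
\begin{equation*}
\lambda_{1,\mu} = \int_{\mathbb{R}^3\times\mathbb{T}} \bigl(|u_\mu|^4 + |u_\mu|^2|v_\mu|^2\bigr)\,dxdy - \int_{\mathbb{R}^3\times\mathbb{T}} \bigl(|\nabla_x u_\mu|^2 + \mu|u_{y,\mu}|^2\bigr)\,dxdy,
\end{equation*}
together with the symmetric expression for $\lambda_{2,\mu}$. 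Combining $J_{\mu_j}-\tfrac{1}{3}Q$ with the just-proved \eqref{eq3.6} yields
\begin{equation*}
m_{1,\mu_j} = \tfrac{1}{6}\int_{\mathbb{R}^3\times\mathbb{T}}\bigl(|\nabla_x u_{\mu_j}|^2 + |\nabla_x v_{\mu_j}|^2\bigr)\,dxdy + \tfrac{\mu_j}{2}\int_{\mathbb{R}^3\times\mathbb{T}}\bigl(|u_{y,\mu_j}|^2 + |v_{y,\mu_j}|^2\bigr)\,dxdy,
\end{equation*}
and since Lemma \ref{L3.2} gives $m_{1,\mu_j} \to 2\pi\widehat{m}_{1/\sqrt{2\pi}}$ together with $\mu_j(\|u_{y,\mu_j}\|_2^2+\|v_{y,\mu_j}\|_2^2) \to 0$, this delivers uniform bounds on $\|\nabla_x u_{\mu_j}\|_2$, $\|\nabla_x v_{\mu_j}\|_2$ and forces $\|u_{y,\mu_j}\|_2, \|v_{y,\mu_j}\|_2 \to 0$. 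Proposition \ref{p2.1} then controls the quartic integrals uniformly in $j$, so $(\lambda_{1,\mu_j})$ and $(\lambda_{2,\mu_j})$ are bounded real sequences; passing to a further subsequence produces the limits $\overline{\lambda_1}, \overline{\lambda_2} \in \mathbb{R}$ claimed in \eqref{eq3.9}.

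The only step that requires genuine care is the uniform $L^4$ bound in the last paragraph, since Proposition \ref{p2.1} involves $\|u_y\|_2$ multiplicatively and would fail to give uniform control if $\|u_{y,\mu_j}\|_2$ blew up; the decay $\|u_{y,\mu_j}\|_2 \to 0$ from Lemma \ref{L3.2} is exactly what rescues this estimate, and the rest is a straightforward application of the ingredients already assembled earlier in the paper.
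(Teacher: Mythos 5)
Your proposal is correct and follows essentially the same route as the paper: the $x$-dilation derivative at $\tau=1$ for \eqref{eq3.6}, the Lagrange multiplier theorem on $S_1\times S_1$ for \eqref{eq3.7}--\eqref{eq3.8}, and testing the equations against $(u_\mu,v_\mu)$ combined with the identities of Lemma \ref{L3.2} for \eqref{eq3.9}. If anything, your handling of \eqref{eq3.9} is slightly more careful than the paper's, which only computes the limit of the sum $\lambda_{1,\mu_j}+\lambda_{2,\mu_j}$, whereas you bound each multiplier separately and extract subsequential limits, which is what \eqref{eq3.9} actually requires.
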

\begin{proof}
Note that, $u_{\mu_j}$ is a constrained minimizer for $J_\mu(u)$  on the ball of size $1$ in $L^2(\mathbb{R}^3 \times\mathbb{T} )$, it follows that $\frac{d}{d\varepsilon}J_{\mu_j}(u_{\mu_j}^\varepsilon,v_{\mu_j}^\varepsilon)\big|_{\varepsilon=1} =0$, where $(u_{\mu_j}^\varepsilon,v_{\mu_j}^\varepsilon)=(\varepsilon^{\frac{3}{2}}u_{\mu_j}(\varepsilon x,y),\varepsilon^{\frac{3}{2}}u_{\mu_j}(\varepsilon x,y))$. Moreover, we have
\begin{eqnarray*}
  \frac{d}{d\varepsilon}J_{\mu_j}(u_{\mu_j}^\varepsilon,v_{\mu_j}^\varepsilon)\big|_{\varepsilon=1}&=&\frac{d}{d\varepsilon}\left[ \int_{\mathbb{R}^3\times \mathbb{T} }\left(\frac{\varepsilon^2}{2}|  \nabla_x u_{\mu_j}|^{2}+\frac{\mu_j }{2}| u_{y, \mu_j}|^2+\frac{\varepsilon^2}{2}|  \nabla_x v_{\mu_j}|^{2}+\frac{\mu_j }{2}| v_{y, \mu_j}|^2\right)dxdy \right. \\
  &&-\left.\frac{\varepsilon^{3}}{4}\int_{\mathbb{R}^3\times\mathbb{T}}(|u_{\mu_j}|^{4}+2|u_{\mu_j}|^2|v_{\mu_j}|^2+|v_{\mu_j}|^{4})dxdy \right] \\
  &=& \int_{\mathbb{R}^3\times \mathbb{T} }|\nabla_xu_{\mu_j}|^{2}dxdy -\frac{3}{4}\int_{\mathbb{R}^3\times\mathbb{T}}|u_{\mu_j}|^{4}dxdy+\int_{\mathbb{R}^3\times \mathbb{T} }|\nabla_xv_{\mu_j}|^{2}dxdy\\
  &&-\frac{3}{4}\int_{\mathbb{R}^3\times\mathbb{T}}|v_{\mu_j}|^{4}dxdy-\frac{3}{2}\int_{\mathbb{R}^3\times\mathbb{T}}|u_{\mu_j}|^{2}|v_{\mu_j}|^{2}dxdy,
\end{eqnarray*}
which implies \eqref{eq3.6} holds. \eqref{eq3.7} and \eqref{eq3.8} follow from Lagrange multiplier technique. Now, by using \eqref{eq3.7} and \eqref{eq3.8}, one has
\begin{equation*}
   \int_{\mathbb{R}^3 \times\mathbb{T} }(| \nabla_xu_{\mu_j}|^2+\mu_j|u_{y,\mu_j}|^2) d xdy +\lambda_{1,\mu_j}\|u_{\mu_j}\|_2^2= \int_{\mathbb{R}^3 \times\mathbb{T} }(|u_{\mu_j}|^{4}+|u_{\mu_j}|^{2}|v_{\mu_j}|^{2}) d xdy
\end{equation*}
and
\begin{equation*}
   \int_{\mathbb{R}^3 \times\mathbb{T} }(| \nabla_xv_{\mu_j}|^2+\mu_j|v_{y,\mu_j}|^2) d xdy +\lambda_{2,\mu_j}\|v_{\mu_j}\|_2^2=\int_{\mathbb{R}^3 \times\mathbb{T} }(|v_{\mu_j}|^{4}+|u_{\mu_j}|^{2}|v_{\mu_j}|^{2}) d xdy.
\end{equation*}
Hence, it follows from \eqref{eq3.6} and \eqref{eq3.3} that
\begin{eqnarray*}
&&\lambda_{1,\mu_j}+\lambda_{2,\mu_j}\\
&=& - \int_{\mathbb{R}^3\times\mathbb{T} }(| \nabla_xu_{\mu_j}|^2+| \nabla_xv_{\mu_j}|^2) d xdy+\int_{\mathbb{R}^3\times\mathbb{T} }(|u_{\mu_j}|^{4}+2|u_{\mu_j}|^{2}|v_{\mu_j}|^{2}+|v_{\mu_j}|^{4})  d xdy+o_j(1) \\
&=&-\int_{\mathbb{R}^3\times\mathbb{T} }(| \nabla_xu_{\mu_j}|^2+| \nabla_xv_{\mu_j}|^2) d xdy+\frac{4}{3}\int_{\mathbb{R}^d\times\mathbb{T} }(| \nabla_xu_{\mu_j}|^2+| \nabla_xv_{\mu_j}|^2) d xdy+o_j(1) \\
    &=&\frac{1}{3}\int_{\mathbb{R}^3\times\mathbb{T} }(| \nabla_xu_{\mu_j}|^2+| \nabla_xv_{\mu_j}|^2) d xdy+o_j(1),
\end{eqnarray*}
Note that,
 \begin{eqnarray*}
 &&J_{\mu_j}(u_{\mu_j},v_{\mu_j})\\
 &=&\frac{1}{2}\int_{\mathbb{R}^3\times\mathbb{T} }(| \nabla_xu_{\mu_j}|^2+| \nabla_xv_{\mu_j}|^2) d xdy-\frac{1}{4}\int_{\mathbb{R}^3\times\mathbb{T} }(|u_{\mu_j}|^{4}+2|u_{\mu_j}|^{2}|v_{\mu_j}|^{2}+|v_{\mu_j}|^{4})d xdy+o_j(1)\\
 &=&\frac{1}{6}\int_{\mathbb{R}^3\times\mathbb{T} }(| \nabla_xu_{\mu_j}|^2+| \nabla_xv_{\mu_j}|^2) d xdy+o_j(1),
 \end{eqnarray*}
 then
\begin{eqnarray*}
\int_{\mathbb{R}^3\times\mathbb{T} }(| \nabla_xu_{\mu_j}|^2+| \nabla_xv_{\mu_j}|^2) d xdy&=&6J_{\mu_j}(u_{\mu_j},v_{\mu_j})=6 m_{1, \mu_j}=6 \cdot2\pi\widehat{m}_{\frac{1}{\sqrt{2\pi}}}+o_j(1),
\end{eqnarray*}
which implies \eqref{eq3.9} holds.
\end{proof}
\begin{lemma}\label{L3.4}
There exists some $w_1,\ w_2 \in \widehat{S}_{\frac{1}{ \sqrt{2\pi}   }}$ such that, up to a subsequence and
$\mathbb{R}^3$-translations, $(u_\mu, v_\mu)$ converges strongly in $H^1(\mathbb{R}_x^3)\times H^1(\mathbb{R}_x^3)$ to $(w_1,w_2)$ as $\mu \rightarrow \infty ,  \widehat{I}(w_1, w_2) = \widehat{m}_{\frac{1}{ \sqrt{2\pi}  }}$, and
$w_1, w_2$ satisfies
\begin{equation} \label{eq3.10}
 \left\{\aligned
&-\Delta_{x}w_1+\overline{\lambda_1} w_1 =|w_1|^{2}w_1+w_1|w_2|^2, \\
&-\Delta_{x}w_2+\overline{\lambda_2} w_2 =|w_2|^{2}w_2+w_2|w_1|^2.
\endaligned
\right.
\end{equation}
\end{lemma}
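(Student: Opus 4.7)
The plan is to extract a strong limit of $(u_{\mu_j}, v_{\mu_j})$ by combining \eqref{eq3.3}, which forces the sequence to become essentially $y$-independent, with a standard concentration-compactness argument in the $x$-variable. By \eqref{eq3.6}, Lemma \ref{L3.2}, and the normalization $\|u_{\mu_j}\|_2 = \|v_{\mu_j}\|_2 = 1$, the pair $(u_{\mu_j}, v_{\mu_j})$ is bounded in $H^1(\mathbb{R}^3 \times \mathbb{T}) \times H^1(\mathbb{R}^3 \times \mathbb{T})$. Introducing the $y$-averages $\bar u_j(x) := \frac{1}{2\pi}\int_\mathbb{T} u_{\mu_j}(x,y)\,dy$ and $\bar v_j(x)$ analogously, the Poincar\'e inequality on $\mathbb{T}$ combined with \eqref{eq3.3} yields $\|u_{\mu_j} - \bar u_j\|_{L^2} + \|v_{\mu_j} - \bar v_j\|_{L^2} = o(1)$, so any weak $H^1$-limit of $(u_{\mu_j}, v_{\mu_j})$ must be $y$-independent.

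Next, since $Q(u_{\mu_j}, v_{\mu_j}) = 0$ and the kinetic energy is bounded below (cf.\ Lemma \ref{L2.1}), the quantity $\int(|u_{\mu_j}|^4 + |v_{\mu_j}|^4)\,dxdy$ stays bounded away from zero. A Lions-type vanishing lemma in the $x$-variable (the only non-compact direction, since $\mathbb{T}$ is compact) then furnishes $x_j \in \mathbb{R}^3$ such that, along a subsequence, $(u_{\mu_j}(\cdot + x_j, \cdot), v_{\mu_j}(\cdot + x_j, \cdot)) \rightharpoonup (w_1, w_2) \neq 0$ weakly in $H^1$; by the previous step, $w_i$ depends only on $x$. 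Testing \eqref{eq3.7}--\eqref{eq3.8} against $y$-independent $\phi \in C_c^\infty(\mathbb{R}^3)$ eliminates the singular $-\mu_j u_{yy}$ and $-\mu_j v_{yy}$ terms, and using \eqref{eq3.9} together with local Sobolev compactness for the cubic nonlinearities allows passage to the limit, yielding \eqref{eq3.10} distributionally, hence classically by elliptic regularity.

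The main obstacle is preventing dichotomy, i.e., showing $\|w_i\|_{L^2_x}^2 = 1/(2\pi)$ and upgrading to strong convergence. By Brezis--Lieb and the $y$-independence of $(w_1, w_2)$,
$$J_{\mu_j}(u_{\mu_j}, v_{\mu_j}) = J_{\mu_j}(u_{\mu_j} - w_1, v_{\mu_j} - w_2) + 2\pi \widehat{I}(w_1, w_2) + o(1).$$
Setting $\sigma_i := 2\pi \|w_i\|_{L^2_x}^2 \in (0,1]$, the residual pair has $L^2$-masses $\sqrt{1-\sigma_i} + o(1)$. If some $\sigma_i < 1$, rescaling the residual via $(u,v) \mapsto (\tau^{3/2}u(\tau x, y), \tau^{3/2}v(\tau x, y))$ back onto the Pohozaev manifold and invoking the identity $\widehat{m}_\rho = \rho^{-2}\widehat{m}_1$ together with $J_{\mu_j}(u_{\mu_j}, v_{\mu_j}) = 2\pi\widehat{m}_{1/\sqrt{2\pi}} + o(1)$ from Lemma \ref{L3.2} produces a strict inequality contradicting the definition of $m_{1,\mu_j}$, mirroring the no-dichotomy step in the proof of Theorem \ref{t1.1}. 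One concludes $\sigma_1 = \sigma_2 = 1$, hence strong $L^2$-convergence; $H^1$-boundedness plus interpolation then gives strong $L^4$-convergence, and the PDE \eqref{eq3.10} upgrades this to strong $H^1$-convergence, with $\widehat{I}(w_1, w_2) = \widehat{m}_{1/\sqrt{2\pi}}$. The delicacy here is that because the problem is mass- and Sobolev-critical and the two components are coupled, no cheap subadditivity is available; one must exploit both the Pohozaev structure $Q=0$ (inherited in the limit $\mu \to \infty$) and the exact scaling of $\widehat{m}_\rho$ on $\mathbb{R}^3$.
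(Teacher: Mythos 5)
Your proposal follows essentially the same route as the paper's proof: $H^1$-boundedness from \eqref{eq3.6} and Lemma \ref{L3.2}, $y$-independence of the weak limit forced by \eqref{eq3.3}, translations in the non-compact $x$-directions to obtain a nontrivial weak limit, passage to the limit in \eqref{eq3.7}--\eqref{eq3.8} using \eqref{eq3.9}, exclusion of mass loss via the Pohozaev structure and the exact scaling $\widehat{m}_\rho=\rho^{-2}\widehat{m}_1$ (the paper delegates this step to \cite{HLTL2026}, while you sketch the rescaling argument mirroring Theorem \ref{t1.1}), and finally strong convergence from convergence of the masses and energies. The argument is correct at the same level of detail as the paper's own proof.
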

\begin{proof}
We split the proof into many steps.

\textbf{Step 1} We prove that \eqref{eq3.10} holds. Indeed, it is well known that
\begin{equation*}
 \left\{\aligned
&-\Delta_{x}w_1+\widetilde{\lambda_1} w_1 =|w_1|^{2}w_1+w_1|w_2|^2\ \text{for a suitable}\ \widetilde{\lambda_1}, \\
&-\Delta_{x}w_2+\widetilde{\lambda_2} w_2 =|w_2|^{2}w_2+w_2|w_1|^2\ \text{for a suitable}\ \widetilde{\lambda_2}.
\endaligned
\right.
\end{equation*}
We must prove that $(\overline{\lambda_1},\overline{\lambda_2})=(\widetilde{\lambda_1},\widetilde{\lambda_2})$. Note that $(\widetilde{\lambda_1}+\widetilde{\lambda_2})\int_{\mathbb{R}^3}|u|^2dx=2\widehat{m}_{\|u\|_2}$, then
\begin{equation*}
  (\widetilde{\lambda_1}+\widetilde{\lambda_2})\cdot\frac{1}{2\pi}=2\widehat{m}_{\|u\|_2}=2\widehat{m}_{\frac{1}{\sqrt{2\pi}}}.
\end{equation*}
Moreover, by using Lemma \ref{L3.3},
\begin{eqnarray*}
  \lambda_{1,\mu_j}+\lambda_{2,\mu_j}&=&\frac{1}{3}\int_{\mathbb{R}^3\times\mathbb{T} }(| \nabla_xu_{\mu_j}|^2+| \nabla_xv_{\mu_j}|^2) d xdy+o_j(1)=2\cdot2\pi\widehat{m}_{\frac{1}{\sqrt{2\pi}}}+o_j(1),
\end{eqnarray*}
passing to the limit in $j$, then $(\overline{\lambda_1},\overline{\lambda_2})=(\widetilde{\lambda_1},\widetilde{\lambda_2})$.

\textbf{Step 2}
According to \eqref{eq3.3}, $\int_{\mathbb{R}^3\times\mathbb{T} }(| \nabla_xu_{\mu_j}|^2+| \nabla_xv_{\mu_j}|^2) d xdy=6\cdot 2\pi\widehat{m}_{\frac{1}{\sqrt{2\pi}}}+o_j(1)$ and $\left\|u_{\mu_j}\right\|_{L_{x, y}^2}=\left\|v_{\mu_j}\right\|_{L_{x, y}^2}=1$, we know that $(u_{\mu_j},v_{\mu_j})$ is bounded in $H^1(\mathbb{R}^3\times \mathbb{T})\times H^1(\mathbb{R}^3\times \mathbb{T})$. Moreover, by the localized Gagliardo-Nirenberg inequality,  there exists  $\tau_j \in \mathbb{R}_x^3 $ such that
$$
(u_{\mu_j}\left(x+\tau_j, y\right),v_{\mu_j}\left(x+\tau_j, y\right)) \rightharpoonup (w_1,w_2) \neq 0 \ \text { in }  H^1(\mathbb{R}^3\times \mathbb{T})\times H^1(\mathbb{R}^3\times \mathbb{T}).
$$
It follows from  \eqref{eq3.3} that $(w_1,w_2)$ is $y$-independent. Taking the limit in \eqref{eq3.7} and \eqref{eq3.8} in the distribution sense, we obtain that
\begin{equation} \label{eq3.11}
 \left\{\aligned
&-\Delta_{x}w_1+\overline{\lambda_1} w_1 =|w_1|^{2}w_1+w_1|w_2|^2, \\
&-\Delta_{x}w_2+\overline{\lambda_2} w_2 =|w_2|^{2}w_2+w_2|w_1|^2.
\endaligned
\right.
\end{equation}
Now, we prove that $\|w_1\|_{L_x^2}=\|w_2\|_{L_x^2}=\frac{1}{ \sqrt{2\pi} }$. Indeed, we can assume $\|w_1\|_{L_x^2}=\|w_2\|_{L_x^2}=\sigma<\frac{1}{ \sqrt{2\pi}}(\|w_1\|_{L_x^2}=\sigma_1<\|w_2\|_{L_x^2}=\sigma_2<\frac{1}{ \sqrt{2\pi}})$. Using $w_1,\ w_2$ are the solution of \eqref{eq3.7} and \eqref{eq3.8}, similar to the proof in \cite{HLTL2026}, we obtain contradiction. Hence,  $\sigma=\frac{1}{ \sqrt{2\pi}}(\sigma_1=\sigma_2=\frac{1}{ \sqrt{2\pi}})$.

\textbf{Step 3} We prove that $u_{\mu_j}\left(x+\tau_j, y\right)$ converges strongly to $w_1$ in $H^1(\mathbb{R}^3\times \mathbb{T})$. In fact, by Lemma \ref{L3.3} and $w_{1,y}=0$, one has
$$
\lim _{j \rightarrow \infty}\left\| u_{y,\mu_j}\left(x+\tau_j, y\right)\right\|_{L_{x, y}^2}=0=\left\|w_{1,y}\right\|_{L_{x, y}^2}.
$$
Hence, it is sufficient to obtain that
$$
\lim _{j \rightarrow \infty}\left\| u_{x,\mu_j}\left(x+\tau_j, y\right)\right\|_{L_{x, y}^2}=\sqrt{2\pi}\left\|w_{1,x}\right\|_{L_x^2}=\left\|w_{1,x}\right\|_{L_{x, y}^2}.
$$
Similar to the proof of Theorem \ref{t1.1}, the last fact follows by combining $\|w_1\|_{L_x^2}=\frac{1}{\sqrt{2\pi}}$ and \eqref{eq3.10}. Similarly, $v_{\mu_j}\left(x+\tau_j, y\right)$ converges strongly to $w_2$ in $H^1(\mathbb{R}^3\times \mathbb{T})$.
\end{proof}
\begin{lemma}\label{L3.5}
There exists $j_0$ such that $(u_{y, \mu_j},v_{y, \mu_j})=0$ for all $j>j_{0}$.
\end{lemma}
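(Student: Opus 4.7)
The plan is to argue by contradiction. Suppose that along a subsequence (still denoted $\{\mu_j\}$) we have $(u_{y,\mu_j}, v_{y,\mu_j})\not\equiv 0$; I will show that for large $\mu_j$ the linearization of the Euler--Lagrange system \eqref{eq3.7}--\eqref{eq3.8} around the $y$-independent limit $(w_1,w_2)$ is strictly coercive on the subspace of $y$-oscillatory functions, and that this coercivity is incompatible with the size of the nonlinear source.

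First I would decompose $u_{\mu_j} = \bar u_{\mu_j} + \phi_j$ and $v_{\mu_j} = \bar v_{\mu_j} + \psi_j$, where $\bar u_{\mu_j}(x):=(2\pi)^{-1}\int_{\mathbb T} u_{\mu_j}(x,y)\,dy$ and analogously for $\bar v_{\mu_j}$, so that $(\phi_j,\psi_j)$ has zero $y$-mean. By Lemma \ref{L3.4}, after an $x$-translation $u_{\mu_j}\to w_1$, $v_{\mu_j}\to w_2$ strongly in $H^1(\mathbb R^3\times\mathbb T)$ with $(w_1,w_2)$ $y$-independent, hence $\bar u_{\mu_j}\to w_1$, $\bar v_{\mu_j}\to w_2$ and $(\phi_j,\psi_j)\to 0$ strongly in $H^1\times H^1$. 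An elliptic bootstrap on \eqref{eq3.7}--\eqref{eq3.8} gives a uniform $L^\infty$ bound on $(\bar u_{\mu_j},\bar v_{\mu_j})$, and the Poincar\'e inequality on $\mathbb T$ yields $\|\phi_j\|_2\le \|u_{y,\mu_j}\|_2$ and $\|\psi_j\|_2\le\|v_{y,\mu_j}\|_2$.

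Subtracting the $y$-averaged equation from \eqref{eq3.7} produces an equation of the form
\begin{equation*}
-\Delta_x\phi_j - \mu_j \phi_{j,yy} + \lambda_{1,\mu_j}\phi_j - (3\bar u_{\mu_j}^2+\bar v_{\mu_j}^2)\phi_j - 2\bar u_{\mu_j}\bar v_{\mu_j}\psi_j = N_1(\phi_j,\psi_j,\bar u_{\mu_j},\bar v_{\mu_j}),
\end{equation*}
with $N_1$ at least quadratic in $(\phi_j,\psi_j)$, and an analogous equation for $\psi_j$. Testing against $(\phi_j,\psi_j)$, summing, invoking Poincar\'e in the form $\mu_j\|\phi_{j,y}\|_2^2\ge\mu_j\|\phi_j\|_2^2$, and controlling the potential terms through the uniform $L^\infty$ bounds on $(\bar u_{\mu_j},\bar v_{\mu_j})$, I get a uniform constant $C>0$ such that
\begin{equation*}
(\mu_j-C)\bigl(\|\phi_j\|_2^2+\|\psi_j\|_2^2\bigr)\le R_j,
\end{equation*}
where $R_j$ gathers all cubic and quartic contributions from $N_1,N_2$.

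The final step is to bound $R_j$. Using the waveguide Gagliardo--Nirenberg inequality (Proposition \ref{p2.1}) together with $\|\nabla_x\phi_j\|_2, \|\phi_j\|_2\to 0$ one obtains $\|\phi_j\|_{L^4}^4=o(\|\phi_j\|_2^2)$ and, by H\"older interpolation $\|\phi_j\|_{L^3}^3\le \|\phi_j\|_2\|\phi_j\|_{L^4}^2$, also $\|\phi_j\|_{L^3}^3=o(\|\phi_j\|_2^2)$, with analogous bounds for $\psi_j$. Every term in $R_j$ is therefore $o(\|\phi_j\|_2^2+\|\psi_j\|_2^2)$, so that $(\mu_j-C)\le o_j(1)$ whenever $(\phi_j,\psi_j)\not\equiv 0$, contradicting $\mu_j\to\infty$. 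The main obstacle is precisely this $o(\|\phi_j\|_2^2)$ control of the remainder: the plain Sobolev embedding $H^1\hookrightarrow L^4$ in dimension $4$ only provides $O(\|\phi_j\|_2^2)$, insufficient to beat the fixed constant $C$ produced by the potential. The decisive gain comes from inserting the strong $H^1$ convergence $(\phi_j,\psi_j)\to 0$ into Proposition \ref{p2.1}, which supplies the extra factor $\|\nabla_x\phi_j\|_2^{3/2}\to 0$; the accompanying uniform $L^\infty$ estimate on $(\bar u_{\mu_j},\bar v_{\mu_j})$ requires an elliptic bootstrap on the averaged equations with uniform-in-$j$ coefficient bounds.
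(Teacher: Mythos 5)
Your proposal is a correct variant of the paper's argument and identifies the right mechanism: both proofs exploit the spectral gap of $-\partial_{yy}$ on $\mathbb{T}$ so that the coercive term $\mu_j\|\phi_{j,y}\|_2^2\ge \mu_j\|\phi_j\|_2^2$ eventually dominates the bounded linearized potential, forcing the oscillatory part to vanish identically rather than merely tend to zero. The paper implements the projection by applying $\sqrt{-\partial_{yy}}$ to \eqref{eq3.7}--\eqref{eq3.8}, testing against $w_{i,\mu_j}=\sqrt{-\partial_{yy}}u_{\mu_j}$ and expanding in the eigenbasis of $-\partial_{yy}$ (the zero mode being absent); your decomposition $u_{\mu_j}=\bar u_{\mu_j}+\phi_j$ combined with Poincar\'e--Wirtinger is the same projection in different clothing, and is arguably cleaner since it avoids commuting $\sqrt{-\partial_{yy}}$ with the nonlinearity. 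You also correctly isolate the crux that the paper treats tersely: the remainder must be $o(\|\phi_j\|_2^2+\|\psi_j\|_2^2)$, not merely $o(1)$, and your handling of the superquadratic terms via Proposition \ref{p2.1} and $\|\phi_j\|_3^3\le\|\phi_j\|_2\|\phi_j\|_4^2$ is exactly what is needed (the paper itself only asserts that its error terms $IV_{\mu_j}+V_{\mu_j}$ tend to $0$, which by itself would only give $\|\phi_j\|_2^2=o(\mu_j^{-2})$, not exact vanishing).

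The one step you assert rather than prove, and which is genuinely delicate here, is the uniform-in-$j$ $L^\infty$ bound on $(\bar u_{\mu_j},\bar v_{\mu_j})$. It is not optional: the term $\int(3\bar u_{\mu_j}^2+\bar v_{\mu_j}^2)\phi_j^2$ must be bounded by $C\|\phi_j\|_2^2$ with $C$ independent of $j$, and any attempt to bypass $L^\infty$ by writing $\bar u_{\mu_j}^2=w_1^2+(\bar u_{\mu_j}^2-w_1^2)$ and using only the $L^4$-convergence from Lemma \ref{L3.4} leaves a contribution of size $o(1)\|\phi_j\|_4^2=o(1)\|\phi_j\|_2$, which does not beat $\mu_j\|\phi_j\|_2^2$ when $\|\phi_j\|_2$ is extremely small, so exact vanishing cannot be concluded that way. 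Moreover, the nonlinearity is Sobolev-critical in total dimension $4$, so a plain bootstrap does not yield a $j$-uniform $L^\infty$ bound; one needs a Brezis--Kato/Moser-type iteration exploiting the strong $H^1$ convergence $u_{\mu_j}\to w_1$ to control the critical potential $|u_{\mu_j}|^2+|v_{\mu_j}|^2$ uniformly, and note that the averaged equation does not close by itself (its right-hand side is $\overline{u_{\mu_j}^3+u_{\mu_j}v_{\mu_j}^2}$, not a function of $\bar u_{\mu_j},\bar v_{\mu_j}$), so the regularity argument must be run on the full equation on $\mathbb{R}^3\times\mathbb{T}$. To be fair, the paper shares this soft spot: it establishes $L^\infty$ bounds only for the limit $(w_1,w_2)$ and never upgrades its error estimate accordingly. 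Supply the uniform $L^\infty$ estimate (or the uniform convergence $u_{\mu_j}\to w_1$, $v_{\mu_j}\to w_2$ in $L^\infty$) and your proof is complete.
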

\begin{proof}
Under the stimulation of \cite{STNT2014}. We introduce $(w_{1,\mu_j},w_{2,\mu_j})=(\sqrt{- \partial_{yy} }u_{\mu_j},\sqrt{- \partial_{yy} }v_{\mu_j})$, note that for all $p \in(1, \infty)$ there exist $c(p), C(p)>0$ such that
\begin{equation}\label{eq3.12}
  c(p)\left\|\sqrt{- \partial_{yy} }u_{\mu_j}\right\|_{L_y^p} \leq\left\| u_{y,\mu_j}\right\|_{L_y^p} \leq C(p)\left\|\sqrt{- \partial_{yy} }u_{\mu_j}\right\|_{L_y^p} .
\end{equation}
Applying $\sqrt{- \partial_{yy} }$ to \eqref{eq3.7} and \eqref{eq3.8}, one has
$$
 -\mu_j  w_{yy,1,\mu_j}-\Delta_xw_{1,\mu_j}+\lambda_{1,\mu_j} w_{1,\mu_j}=\sqrt{- \partial_{yy} }\left(u_{\mu_j}^{3}+u_{\mu_j}v_{\mu_j}^2 \right)
$$
and
$$
 -\mu_j  w_{yy,2,\mu_j}-\Delta_xw_{2,\mu_j}+\lambda_{2,\mu_j} w_{2,\mu_j}=\sqrt{- \partial_{yy} }\left(v_{\mu_j}^{3}+v_{\mu_j}u_{\mu_j}^2 \right).
$$
After multiplication by $w_{1,\mu_j}$ and $w_{2,\mu_j}$, we know that
$$
\int_{\mathbb{R}^3 \times\mathbb{T} }  \left[\mu_j\left|  w_{y,1, \mu_j}\right|^2+\left|  \nabla_xw_{1,\mu_j}\right|^2+\lambda_{1,\mu_j}\left|w_{1,\mu_j}\right|^2-\sqrt{- \partial_{yy} }\left(u_{\mu_j}^{3}+v_{\mu_j}u_{\mu_j}^2\right)w_{1,\mu_j}\right] d x dy=0
$$
and
$$
\int_{\mathbb{R}^3 \times\mathbb{T} }  \left[\mu_j\left|  w_{y,2, \mu_j}\right|^2+\left|  \nabla_xw_{2,\mu_j}\right|^2+\lambda_{2,\mu_j}\left|w_{2,\mu_j}\right|^2-\sqrt{- \partial_{yy} }\left(v_{\mu_j}^{3}+u_{\mu_j}v_{\mu_j}^2\right)w_{2,\mu_j}\right] d x dy=0.
$$
By direct calculation,
\begin{eqnarray*}
&& \int_{\mathbb{R}^3 \times\mathbb{T} }\left[\left(\mu_j-1\right)\left| w_{y,1,\mu_j}\right|^2- \sqrt{- \partial_{yy} }\left(3u_{\mu_j} |w_{1}|^{2}+v_{\mu_j} |w_{1}|^{2}+2w_2w_1u_{\mu_j} \right) w_{1,\mu_j}  \right]d x d y \\
& +& \int_{\mathbb{R}^3 \times\mathbb{T} }\left[\left(\mu_j-1\right)\left| w_{y,2,\mu_j}\right|^2- \sqrt{- \partial_{yy} }\left(3v_{\mu_j} |w_{2}|^{2}+u_{\mu_j} |w_{2}|^{2}+2w_1w_2v_{\mu_j} \right) w_{2,\mu_j}  \right]d x d y  \\
& +&\int_{\mathbb{R}^3 \times\mathbb{T} }\left[\left| w_{y,1, \mu_j}\right|^2+\left| w_{y,2, \mu_j}\right|^2+\left|\nabla_xw_{1,\mu_j}\right|^2+\left|\nabla_xw_{2,\mu_j}\right|^2+\overline{\lambda}_1\left|w_{1,\mu_j}\right|^2+\overline{\lambda}_2\left|w_{2,\mu_j}\right|^2\right] d x d y \\
&+&\int_{\mathbb{R}^3 \times\mathbb{T} }\sqrt{- \partial_{yy} }\left(3u_{\mu_j} |w_{1}|^{2}+v_{\mu_j} |w_{1}|^{2}+2w_2w_1u_{\mu_j}-u_{\mu_j}^{3}-v_{\mu_j}u_{\mu_j}^2\right) w_{1,\mu_j}dxdy\\
&+&\int_{\mathbb{R}^3 \times\mathbb{T} }\sqrt{- \partial_{yy} }\left(3v_{\mu_j} |w_{2}|^{2}+u_{\mu_j} |w_{2}|^{2}+2w_1w_2v_{\mu_j}-v_{\mu_j}^{3}+u_{\mu_j}v_{\mu_j}^2\right) w_{2,\mu_j}dxdy\\
& +&\int_{\mathbb{R}^3 \times\mathbb{T} }(\lambda_{1,\mu_j}-\overline{\lambda}_1)\left|w_{1,\mu_j}\right|^2 d x d y+\int_{\mathbb{R}^3 \times\mathbb{T} }(\lambda_{2,\mu_j}-\overline{\lambda}_2)\left|w_{2,\mu_j}\right|^2 d x d y \\
 &\equiv& I_{\mu_j}+II_{\mu_j}+III_{\mu_j}+IV_{\mu_j}+V_{\mu_j}+VI_{\mu_j}=0.
\end{eqnarray*}
Next we fix an orthonormal basis of eigenfunctions for $- \partial_{yy}$, that is, $- \partial_{yy} \varphi_k=\rho_k \varphi_k$ and $\varphi_0=C$, where $C$ is a constant.

In order to estimate $I_{\mu_j}$ and $II_{\mu_j}$, we write
\begin{equation}\label{eq3.13}
  w_{1,\mu_j}(x, y)= \sum_{k \in \mathbb{N}  \backslash\{0\}} a_{1,\mu_j,k}(x) \varphi_{1,k}(y),
\end{equation}
and
\begin{equation}\label{eq3.14}
  w_{2,\mu_j}(x, y)= \sum_{k \in \mathbb{N}  \backslash\{0\}} a_{2,\mu_j,k}(x) \varphi_{2,k}(y),
\end{equation}
where the eigenfunction $\varphi_0$ does not enter in the development. Using standard elliptic regularity theory, we are able to show that
$(w_1,w_2) \in L^\infty_x$. By using \eqref{eq3.13} and \eqref{eq3.14}, one has
\begin{eqnarray*}
&&I_{\mu_j} +II_{\mu_j}\\
&=&\int_{\mathbb{R}^3 \times\mathbb{T} }\left[\left(\mu_j-1\right)\left| w_{y,1,\mu_j}\right|^2- \sqrt{- \partial_{yy} }\left(3u_{\mu_j} |w_{1}|^{2}+v_{\mu_j} |w_{1}|^{2}+2w_2w_1u_{\mu_j} \right) w_{1,\mu_j}  \right]d x d y\\
&&+\int_{\mathbb{R}^3 \times\mathbb{T} }\left[\left(\mu_j-1\right)\left| w_{y,2,\mu_j}\right|^2- \sqrt{- \partial_{yy} }\left(3v_{\mu_j} |w_{2}|^{2}+u_{\mu_j} |w_{2}|^{2}+2w_1w_2v_{\mu_j} \right) w_{2,\mu_j}  \right]d x d y\\
&\geq&(\mu_j-1)\int_{\mathbb{R}^3 \times\mathbb{T} } \left|  w_{y,1, \mu_j}\right|^2dxdy-C(\|w_1\|_{L^\infty},\|w_2\|_{L^\infty})\int_{\mathbb{R}^3 \times\mathbb{T} } (w_{1,\mu_j}w_{2,\mu_j}+w_{1,\mu_j}^2) d x d y \\
&&+(\mu_j-1)\int_{\mathbb{R}^3 \times\mathbb{T} } \left|  w_{y,2, \mu_j}\right|^2dxdy-C(\|w_1\|_{L^\infty},\|w_2\|_{L^\infty})\int_{\mathbb{R}^3 \times\mathbb{T} } (w_{1,\mu_j}w_{2,\mu_j}+w_{2,\mu_j}^2) d x d y \\
&\geq&(\mu_j-1)\int_{\mathbb{R}^3 \times\mathbb{T} } \left|  w_{y,1, \mu_j}\right|^2dxdy-C(\|w_1\|_{L^\infty},\|w_2\|_{L^\infty})\int_{\mathbb{R}^3 \times\mathbb{T} } w_{1,\mu_j}^2  d x d y \\
&&+(\mu_j-1)\int_{\mathbb{R}^3 \times\mathbb{T} } \left|  w_{y,2, \mu_j}\right|^2dxdy-C(\|w_1\|_{L^\infty},\|w_2\|_{L^\infty})\int_{\mathbb{R}^3 \times\mathbb{T} } w_{2,\mu_j}^2  d x d y \\
&\geq& C\sum_{k \neq 0}\left(\mu_j-1\right)\left|\rho_k\right|^2 \int_{\mathbb{R}^3 }\left|  a_{1,\mu_j,k}(x)\right|^2 d x- C(\|w_1\|_{L^\infty},\|w_2\|_{L^\infty})\sum_{k \neq 0} \int_{\mathbb{R}^3 } \left| a_{1,\mu_j,k}(x)\right|^2 d x \\
&&+ C\sum_{k \neq 0}\left(\mu_j-1\right)\left|\rho_k\right|^2 \int_{\mathbb{R}^3 }\left|  a_{2,\mu_j,k}(x)\right|^2 d x- C(\|w_1\|_{L^\infty},\|w_2\|_{L^\infty})\sum_{k \neq 0} \int_{\mathbb{R}^3 } \left| a_{2,\mu_j,k}(x)\right|^2 d x \\
&\geq&0
\end{eqnarray*}
for $\mu_j\gg1$. It follows from \eqref{eq3.9} that
$$
 VI_{\mu_j}=\int_{\mathbb{R}^3 \times\mathbb{T} }(\lambda_{1,\mu_j}-\overline{\lambda}_1)\left|w_{1,\mu_j}\right|^2 d x d y+\int_{\mathbb{R}^3 \times\mathbb{T} }(\lambda_{2,\mu_j}-\overline{\lambda}_2)\left|w_{2,\mu_j}\right|^2 d x d y\rightarrow0 \ \text{as}\ \mu_j\rightarrow+\infty.
$$
Hence, one has $I_{\mu_j}+II_{\mu_j}+VI_{\mu_j} \geq 0$ for $\mu_j$ large enough. In order to estimate $I II_{\mu_j}$ and $IV_{\mu_j}$, notice that, by the Cauchy-Schwartz inequality and \eqref{eq3.12} we get
\begin{eqnarray*}
&&\int_{\mathbb{R}^3 \times\mathbb{T} }\sqrt{- \partial_{yy} }\left(3u_{\mu_j} |w_{1}|^{2}+v_{\mu_j} |w_{1}|^{2}+2w_2w_1u_{\mu_j}-u_{\mu_j}^{3}-v_{\mu_j}u_{\mu_j}^2\right) w_{1,\mu_j}dxdy \\
&\leq&\int_{\mathbb{R}^3 \times\mathbb{T} } 3[(|w_{1}|^{2} -u_{\mu_j}^{2}) w_{1,\mu_j}^2+w_{1,\mu_j}w_{2,\mu_j}( |w_{1}|^{2}-u_{\mu_j}^2)+2w_{1,\mu_j}^2(w_2w_1-v_{\mu_j}u_{\mu_j})]dxdy \\
&\leq&C\int_{\mathbb{R}^3 \times\mathbb{T} } |w_{1}  -u_{\mu_j}|(w_{1}+u_{\mu_j})  w_{1,\mu_j}^2 dxdy+C\int_{\mathbb{R}^3 \times\mathbb{T} } |w_{1}  -u_{\mu_j}|(w_{1}+u_{\mu_j})  w_{1,\mu_j}w_{2,\mu_j}  dxdy \\
&&+C\int_{\mathbb{R}^3 \times\mathbb{T} } w_{1,\mu_j}^2(w_2-v_{\mu_j})w_1  dxdy+C\int_{\mathbb{R}^3 \times\mathbb{T} } w_{1,\mu_j}^2(w_1-u_{\mu_j})v_{\mu_j}  dxdy\\
&\leq& C\|w_1-u_{\mu_j}\|_{L_{x, y}^{4}} (\|w_1\|_{L_{x, y}^{4}} +\|u_{ \mu_j}\|_{L_{x, y}^{4}} ) \|w_{1,\mu_j}\|_{L_{x, y}^{4}}^2+C\|w_1-u_{\mu_j}\|_{L_{x, y}^{4}} \|v_{\mu_j}\|_{L_{x, y}^{4}}  \|w_{1,\mu_j}\|_{L_{x, y}^{4}}^2\\
&&+C\|w_1-u_{\mu_j}\|_{L_{x, y}^{4}} (\|w_1\|_{L_{x, y}^{4}} +\|u_{\mu_j}\|_{L_{x, y}^{4}} ) \|w_{1,\mu_j}\|_{L_{x, y}^{4}}\|w_{2,\mu_j}\|_{L_{x, y}^{4}}\\
&&+C\|w_2-v_{\mu_j}\|_{L_{x, y}^{4}} \|w_{1}\|_{L_{x, y}^{4}}  \|w_{1,\mu_j}\|_{L_{x, y}^{4}}^2 .
\end{eqnarray*}
By Lemma \ref{L3.4}, we know that $(u_{\mu_j},v_{\mu_j})\rightarrow (w_1,w_2)$ in $H^1(\mathbb{R}^3 \times\mathbb{T})\times H^1(\mathbb{R}^3 \times\mathbb{T})$.
Therefore, it follows from Sobolev embedding $H^1(\mathbb{R}^3 \times\mathbb{T})\hookrightarrow L_{x, y}^{p}$  that
\begin{equation*}
  III_{\mu_j}+IV_{\mu_j} \rightarrow0 \ \text{as}\ \mu_j\rightarrow+\infty.
\end{equation*}
By combining this information and the structure of $I I_{\mu_j}$, one has
$$
0\geq I I_{\mu_j} \geq 0 \quad \text { for } j>j_0,
$$
so we deduce $(w_{1, \mu_j},w_{2, \mu_j})=0$ for $\mu_j$ large enough.
\end{proof}
\begin{lemma}\label{L3.6}
There exists some $\mu^* \in (0,\infty )$ such that

(1) for all $\mu \in (0,\mu^*)$ we have $m_{1,\mu} <  2\pi   \widehat{m}_{\frac{1}{\sqrt{2\pi}} }$. Moreover, for $\mu \in (0, \mu^*)$
any minimizer $(u_\mu,v_\mu)$ of $m_{1,\mu}$ satisfies $(u_{y,\mu},v_{y,\mu})\neq 0$;

(2) for all $\mu \in (\mu^*,\infty)$ we have $m_{1,\mu}=  2\pi   \widehat{m}_{\frac{1}{\sqrt{2\pi}} } $. Moreover, for $\mu \in (\mu^* ,\infty )$
any minimizer $(u_\mu,v_\mu)$ of $m_{1,\mu}$ satisfies $(u_{y,\mu},v_{y,\mu})= 0$.
\end{lemma}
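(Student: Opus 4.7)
The plan is to identify $\mu^*$ as a threshold obtained by taking the infimum of the set on which equality holds, and to use the monotonicity of $\mu\mapsto m_{1,\mu}$ together with Lemmas \ref{L3.1}, \ref{L3.2}, and \ref{L3.5} to partition $(0,\infty)$ into the two regimes stated.

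First I would establish two preliminary facts: (i) monotonicity, namely for $\mu_1<\mu_2$ and any admissible $(u,v)$ one has $J_{\mu_1}(u,v)\leq J_{\mu_2}(u,v)$, so passing to the infimum yields $m_{1,\mu_1}\leq m_{1,\mu_2}$; and (ii) the universal upper bound $m_{1,\mu}\leq 2\pi\widehat{m}_{1/\sqrt{2\pi}}$, obtained by testing with the $y$-independent pair $(w,w)$ where $w\in H^1(\mathbb{R}^3)$ is an optimizer of $\widehat{m}_{1/\sqrt{2\pi}}$ with $\|w\|_{L_x^2}^2=(2\pi)^{-1}$. Then I set
\begin{equation*}
\mu^*:=\inf\bigl\{\mu>0:\ m_{1,\mu}=2\pi\widehat{m}_{1/\sqrt{2\pi}}\bigr\}.
\end{equation*}

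Next I would verify $\mu^*\in(0,\infty)$. Lemma \ref{L3.1} gives $\mu^*>0$: its explicit test function $\psi$ satisfies $J_\mu(\psi,\psi)\to J_0(\psi,\psi)<2\pi\widehat{m}_{1/\sqrt{2\pi}}$ as $\mu\to 0$, so the strict inequality $m_{1,\mu}<2\pi\widehat{m}_{1/\sqrt{2\pi}}$ persists on some interval $(0,\mu_0)$. Lemma \ref{L3.5} gives $\mu^*<\infty$: for all sufficiently large $\mu$ every minimizer $(u_\mu,v_\mu)$ satisfies $(u_{y,\mu},v_{y,\mu})=0$, so $(u_\mu,v_\mu)$ is $y$-independent; the mass constraint forces $\|u_\mu\|_{L_x^2}=\|v_\mu\|_{L_x^2}=(2\pi)^{-1/2}$, and hence $m_{1,\mu}=J_\mu(u_\mu,v_\mu)=2\pi\widehat{I}(u_\mu,v_\mu)\geq 2\pi\widehat{m}_{1/\sqrt{2\pi}}$, which combined with the upper bound forces equality. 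By monotonicity and the upper bound, the equality set is upward-closed: if $\mu_0$ lies in it and $\mu>\mu_0$, then $2\pi\widehat{m}_{1/\sqrt{2\pi}}=m_{1,\mu_0}\leq m_{1,\mu}\leq 2\pi\widehat{m}_{1/\sqrt{2\pi}}$. Therefore $m_{1,\mu}=2\pi\widehat{m}_{1/\sqrt{2\pi}}$ for $\mu>\mu^*$ and $m_{1,\mu}<2\pi\widehat{m}_{1/\sqrt{2\pi}}$ for $\mu<\mu^*$.

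Finally I characterize the minimizers in each regime by contradiction. For $\mu\in(0,\mu^*)$, if some minimizer satisfied $(u_{y,\mu},v_{y,\mu})=0$, the same $y$-independent reduction would yield $m_{1,\mu}=2\pi\widehat{I}(u_\mu,v_\mu)\geq 2\pi\widehat{m}_{1/\sqrt{2\pi}}$, contradicting $m_{1,\mu}<2\pi\widehat{m}_{1/\sqrt{2\pi}}$. For $\mu\in(\mu^*,\infty)$, suppose a minimizer $(u_\mu,v_\mu)$ had $\int_{\mathbb{R}^3\times\mathbb{T}}(|u_{y,\mu}|^2+|v_{y,\mu}|^2)\,dxdy>0$; picking $\mu_1\in(\mu^*,\mu)$ and using
\begin{equation*}
J_{\mu_1}(u_\mu,v_\mu)=J_\mu(u_\mu,v_\mu)-\frac{\mu-\mu_1}{2}\int_{\mathbb{R}^3\times\mathbb{T}}(|u_{y,\mu}|^2+|v_{y,\mu}|^2)\,dxdy<m_{1,\mu}=2\pi\widehat{m}_{1/\sqrt{2\pi}}
\end{equation*}
would give $m_{1,\mu_1}<2\pi\widehat{m}_{1/\sqrt{2\pi}}$, contradicting $\mu_1>\mu^*$. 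The main obstacle I expect is packaging everything so that the infimum $\mu^*$ cleanly separates the two behaviors: this requires simultaneously invoking the strict gap from Lemma \ref{L3.1}, the strict positivity of the $\mu\int|u_y|^2$ contribution to $J_\mu$ (to rule out the $y$-dependent case for $\mu>\mu^*$), and the $y$-triviality of minimizers for large $\mu$ from Lemma \ref{L3.5}, in order to ensure that $\mu^*\in(0,\infty)$ and that the equality set is exactly a half-line with endpoint $\mu^*$.
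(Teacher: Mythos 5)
Your proposal is correct and follows essentially the same route as the paper: define $\mu^*$ as the infimum of the equality set, use monotonicity of $\mu\mapsto m_{1,\mu}$ together with the $y$-independent upper bound to show the equality set is a half-line, invoke Lemma \ref{L3.1} for $\mu^*>0$, and run the same perturbation-in-$\mu$ contradiction to show minimizers are $y$-dependent below $\mu^*$ and $y$-independent above it. If anything, your argument is slightly more complete, since you explicitly use Lemma \ref{L3.5} (rather than only Lemma \ref{L3.2}, which gives the limit but not attainment at finite $\mu$) to conclude $\mu^*<\infty$.
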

\begin{proof}
Define
\begin{equation*}
  \mu^*:= \inf\{ \mu \in (0,\infty ) : m_{1,\mu} = 2\pi   \widehat{m}_{\frac{1}{\sqrt{2\pi}} },\  \forall  \mu\geq\mu^*\} .
\end{equation*}
The existence of $\mu^*$ follows by Lemmas \ref{L3.1} and \ref{L3.2}. The fact that a minimizer of
$m_{1,\mu}$ has nontrivial $y$-dependence and $m_{1,\mu} < 2\pi   \widehat{m}_{\frac{1}{\sqrt{2\pi}} }$ for $\mu < \mu^*$ follows already
from the definition of $\mu^*$ and the fact that $\mu\mapsto m_{1,\mu}$ is monotone increasing.

Now, we claim that any minimizer of $m_{1,\mu}$ for $\mu > \mu^*$  must be $y$-independent. In fact, assume that an optimizer $(u_{\mu},v_{\mu})$ of $m_{1,\mu}$ satisfies $\|u_\mu\|_2^2 \neq 0$ and $\|v_\mu\|_2^2 \neq 0$.
Then there exists some $\mu^{**} \in (\mu^*, \mu)$. Consequently,
\begin{equation*}
  2\pi   \widehat{m}_{\frac{1}{\sqrt{2\pi}}} = m_{1,\mu^{**}} \leq J_{\mu^{**}} (u_{\mu } ) = J_\mu (u_\mu) + \frac{\mu^{**}-\mu}{2} (\| u_{y,\mu} \|_2^2+\| v_{y,\mu} \|_2^2)
< J_\mu  (u_\mu ) = m_{1,\mu} = 2\pi   \widehat{m}_{\frac{1}{\sqrt{2\pi}}},
\end{equation*}
which is a contradiction. We also get contradiction by similar argument when $\|u_\mu\|_2^2 \neq 0(\|v_\mu\|_2^2=0)$ and $\|v_\mu\|_2^2 \neq 0(\|u_\mu\|_2^2 = 0)$.
\end{proof}
\noindent\textbf{Proof of Theorem \ref{t1.2}}  By computation, we have that the map
 \begin{equation*}
    S_1\ni u\rightarrow \tau^{-2}u(\tau^{-2}x,y) \in S_\tau
 \end{equation*}
where
\begin{equation*}
  S_\tau = \{v \in H^1(\mathbb{R}^3 \times\mathbb{T} ) : \|v\|_{L^2_{x,y}} = \tau\}
\end{equation*}
is a bijection. Moreover,
 \begin{eqnarray*}
m_\tau&=&I(\tau^{-2}u(\tau^{-2}x,y),\tau^{-2}v(\tau^{-2}x,y))\\
&=&\tau^{-2}\int_{\mathbb{R}^3\times \mathbb{T} }\left[\frac{1}{2}\left(|  \nabla_xu|^{2}+\tau^{4}|u_{y}|^{2}+|  \nabla_xv|^{2}+\tau^{4}|v_{y}|^{2}\right) -\frac{1}{4} (|u|^{4}+2|u|^2|v|^2+|v|^{4})\right] dxdy \\
&=&\tau^{-2}m_{1,\tau^{4}}.
 \end{eqnarray*}
It follows from the same rescaling arguments that $\widehat{m}_{\frac{\tau}{\sqrt{2\pi}}}=\tau^{-2}\widehat{m}_{\frac{1}{\sqrt{2\pi}}}$ for $\tau> 0$. Thus, by Lemma \ref{L3.6}, there exists $\tau_0\in(0,+\infty)$ such that

(1) for all $\tau \in (0,\tau_0)$ we have
\begin{equation*}
  m_\tau=\tau^{-2}m_{1,\tau^{4}}<\tau^{-2}2\pi\widehat{m}_{\frac{1}{\sqrt{2\pi}}}=2\pi\widehat{m}_{\frac{\tau}{\sqrt{2\pi}}}.
\end{equation*}

(2) for all $\tau \in (\tau_0,+\infty)$ we have
\begin{equation*}
  m_\tau=\tau^{-2}m_{1,\tau^{4}}=\tau^{-2}2\pi\widehat{m}_{\frac{1}{\sqrt{2\pi}}}=2\pi\widehat{m}_{\frac{\tau}{\sqrt{2\pi}}}.
\end{equation*}
This completes the proof.

\textbf{Data availability} This article has no additional data.

\textbf{Conflict of interest} On behalf of all authors, the corresponding author states that there is no Conflict of
interest.

\end{document}